\let\csname equation*\endcsname\relax
\let\csname endequation*\endcsname\relax
\tikzset{>=stealth}
\newtheorem{mydef}{Definition}
\newtheorem{mythm}{Theorem}
\newtheorem{myprop}{Proposition}
\newtheorem{lem}[mythm]{Lemma}
\newtheorem{rem}[mythm]{Remark}
\newtheorem{cor}[mythm]{Corollary}
\DeclareMathOperator{\rnk}{rank}
\DeclareMathOperator{\id}{id}
\DeclareMathOperator{\Img}{Im}
\newcommand{\lb}{\big[\!\!\big[}
\newcommand{\rb}{\big]\!\!\big]}
\title{Characterization of Exact Lumpability for Vector Fields on Smooth Manifolds}
\author{Leonhard Horstmeyer\thanks{Max Planck Institute for Mathematics in the Sciences, Inselstra\ss e 22, 04103 Leipzig, Germany.  \texttt{(horstmey@mis.mpg.de)}}\;\,\thanks{Basic Research Community for Physics, Mariannenstra\ss e 89, 04315 Leipzig, Germany.} 
\and 
Fatihcan M. Atay\thanks{Department of Mathematics, Bilkent University, 06800 Bilkent, Ankara, Turkey.  \texttt{(atay@member.ams.org)}}}
\date{}
\begin{document}
\maketitle
\begin{tikzpicture}[remember picture,overlay]
  \node [xshift=2.75cm,yshift=-5.5cm] at (current page.north west)
    [above right]
        {\scriptsize\begin{tabular}{l}Preprint. Final version in: \\Differ.Geom.Appl. \textbf{48} (2016) 46-60 \\
        doi: \ 10.1016/j.difgeo.2016.06.001\end{tabular}};
\end{tikzpicture}




\begin{abstract}
We characterize the exact lumpability of smooth vector fields on smooth manifolds.
We derive necessary and sufficient conditions for lumpability
and express them from four different perspectives,
thus simplifying and generalizing various results from the literature that exist for Euclidean spaces. 
We introduce a partial connection on the pullback bundle that is related to the Bott connection and behaves like a Lie derivative. The lumping conditions are formulated in terms of the differential of the lumping map, its covariant derivative with respect to the connection and their respective kernels. Some examples are discussed to illustrate the theory.
\end{abstract}

PACS numbers: 02.40.-k, 02.30.Hq, 02.40.Hw

AMS classification scheme numbers: 37C10, 34C40, 58A30, 53B05, 34A05

\vspace{2pc}
\noindent{\it Keywords}: lumping, aggregation, dimensional reduction, Bott connection

\section{Introduction}

Dimensional reduction is an important aspect in the study of smooth dynamical systems and in particular in modeling with ordinary differential equations (ODEs). Often a reduction can elucidate key mechanisms, find decoupled subsystems, reveal conserved quantities, make the problem computationally tractable, or rid it from redundancies. A dimensional reduction by which micro state variables are aggregated into macro state variables also goes by the name of \textit{lumping}. Starting from a micro state dynamics, this aggregation induces a \textit{lumped dynamics} on the macro state space. Whenever a non-trivial lumping, one that is neither the identity nor maps to a single point, confers the defining property to the induced dynamics, one calls the dynamics \textit{exactly lumpable} and the map an \textit{exact lumping}. 

Our aim in this paper is to provide necessary and sufficient conditions for exact lumpability of smooth dynamics generated by a system of ODEs on smooth manifolds. 
To be more precise, let $X$ and $Y$ be two smooth manifolds of dimension $n$ and $m$, respectively, with $0<m<n$. 
Let $p_X:TX\to X$ and $p_Y:TY\to Y$ be their tangent bundles, whose fibers we take as spaces of derivations, and let $v$ be an element of the smooth sections $\Gamma^\infty(X,TX)$ of $TX$ over $X$, i.e. smooth maps from $X$ to $TX$ satisfying $p_X\circ v=\id  _X$.
The integral curves $\Phi_t$ of $v$ satisfy the equation
\begin{equation} \label{eq:dx}
	\frac{d}{dt}\Big|_{t=s} \Phi_t(x)=v(\Phi_s(x)) \;.
\end{equation}
On a local coordinate patch $U\subseteq X$ we can write (\ref{eq:dx}) as $\dot x^i=v^i(x)$ so that we recover an ODE on that patch. 
Consider a smooth surjective submersion $\pi:X\to Y$ and let $\Theta_t(x)=\pi\circ\Phi_t(x)$. Since $\dim(Y) < \dim(X)$, the mapping $\pi$ is many-to-one, and hence is called a \emph{lumping}. The question is whether there exists a smooth dynamics on $Y$ that is generated by another system of ODEs,
\begin{equation*} 
	\frac{d}{dt}\Big|_{t=s} \Theta_t(x)=\tilde v(\Theta_s(x))
\end{equation*}
for some smooth vector field $\tilde v$ on $Y$.
If that is the case, we say that \eqref{eq:dx} is \emph{exactly lumpable} for the map $\pi$. Geometrically this means that $\tilde v$ and $v$ are $\pi$-related \cite{Lee03}.
The reduction of the state space dimension has been studied for Markov chains by Burke and Rosenblatt \cite{BurRos58,BurRos59} in the 1960s. Kemeny and Snell \cite{KemSne70} have studied its variants and called them weak and strong lumpability. 
Many conditions have been found, mostly in terms of linear algebra, for various forms of Markov lumpability \cite{KemSne70,BarTho77,RubSer89,RubSer91,RubSer93,BalYeo93,Buchho94e,JacGoe09,Jacobi10}. Since Markov chains are characterized by linear transition kernels, most of these conditions carry over directly to the case of linear difference and differential equations. 
In 1969 Kuo and Wei studied exact \cite{WeiKuo69a} and approximate lumpability \cite{WeiKuo69b} in the context of monomolecular reaction systems, which are systems of linear first order ODEs of the form $\dot x=Ax$. They gave two equivalent conditions for exact lumpability in terms of the commutativity of the lumping map with the flow or with the matrix $A$ respectively. 
Luckyanov \cite{Lucky83} and Iwasa \cite{Iwasa87} studied exact lumpability in the context of ecological modeling and derived further conditions in terms of the Jacobian of the induced vector field and the pseudoinverse of the lumping map. 
Iwasa also only considered submersions.
The program was then continued by Li and Rabitz et al., who wrote a series of papers successively generalizing the setting, but remaining in the Euclidean realm. 
They first constrained the analysis to linear lumping maps \cite{LiRabit89}, where they offered for the first time two construction methods in terms of matrix decompositions of the vector field Jacobian. These methods, together with the observability concept \cite{Luenb64} from control theory, were employed to arrive at a scheme for  approximate lumpings with linear maps \cite{LiRabit90}. 
They extended their analysis further to exact nonlinear lumpings of general nonlinear but differentiable dynamics \cite{LiRabTot94}, providing a set of necessary and sufficient conditions, extending and refining those obtained by Kuo, Wei, Luckyanov and Iwasa. By considering the spaces that are left invariant under the Jacobian of the vector field, they open up a new fruitful perspective, namely the tangent space distribution viewpoint.

The connection to control theory has been made explicit  in \cite{Coxson84}. Coxson notes that exact lumpability is an extreme case of non-observability, where the lumping map is viewed as the observable. She specifies another necessary and sufficient condition by stating that the rank of the observability matrix ought to be equal to the rank of the lumping map itself. The geometric theory of nonlinear control is outlined in, e.g., \cite{Isidor95}. There, Isidori considers sets of observables $h_i$ with values in $\mathbb R$ and their differentials $dh_i$. He discusses how to obtain the maximal observable subspace in an iterative fashion, where one consecutively constructs distributions that are invariant under the vector field and contain the kernel of the $dh_i$ \cite[p. 69]{Isidor95}. This distribution is constructed by means of $\mathcal L_v dh_i$, the Lie derivatives of $dh_i$. Although this theory is not concerned with the case of exact lumping, 
it follows that in the exactly lumpable case the maximal observable subspace is precisely the kernel of the $dh_i$ and the Lie derivatives $\mathcal L_v dh_i$ are just linear combinations of $dh_i$. (We obtain similar results, but allow for general maps, that are not necessarily $\mathbb R$-valued.) 

In this paper we tie together all these strands into one geometric theory of exact lumpability. The conditions obtained by Iwasa, Luckyanov, Coxson, Li, Rabitz, and T\'oth are contained in this framework. Instead of considering the distribution spanned by the differential of the lumping map, as is done in \cite{LiRabTot94} although not explicitly, we consider the vertical distribution which is defined by the kernel of the differential. We begin by stating the mathematical setting in Section \ref{ssc:Pre}. We then define the notion of exact smooth lumpability and provide two elementary propositions in terms of commutative diagrams  in Section \ref{ssc:Char}. In Section \ref{ssc:LVD} we characterize exact lumpability in terms of the vertical distribution and partial connections on it. In Section \ref{sc:PrEx} we investigate some properties of exact lumpings and illustrate them with examples.

\section{Characterization of Lumpability}
\label{sc:Char}
\subsection{Preliminaries}
\label{ssc:Pre}
As above, let $X$ and $Y$ be two smooth manifolds of dimension $n$ and $m$ and $p_X:TX\to X$ and $p_Y:TY\to Y$ be their tangent bundles, respectively.
The differential of a smooth manifold map $\pi:X\to Y$ 
at point $x$ is a $\mathbb R$-linear map $D \pi_x:T_xX\to T_{\pi(x)}Y$. 
For $w_x\in T_xX$ the vector $D\pi_x w_x$ can be defined via its action as a derivation $D \pi_x w_x [f]=w_x[f\circ \pi]$ on smooth test functions $f\in C^\infty(X,\mathbb R)$. We use square brackets to enclose the argument of the derivation. The map $\pi$ is a \textit{submersion} if $D \pi_x$ is surjective with constant rank for all $x\in X$.
We denote by $\pi^{-1}TY$ the pullback bundle whose fibers at $x$ are $T_{\pi(x)}Y$. There are two bundle maps associated to the differential. The first one is a manifold map $D\pi:TX\to TY$ which respects the vector bundle structure and satisfies $p_Y\circ D\pi=\pi\circ p_X$. The second one is a vector bundle homomorphism over the same base $D \pi\,:TX\to \pi^{-1}TY$. 
This latter one induces a $\mathcal C^\infty$-linear map on the vector fields $D \pi\, :\Gamma^{\infty}(X,TX)\to \Gamma^\infty(X,\pi^{-1}TY)$. All of these are denoted by $D\pi$ and the context will tell them apart. One can only define a vector field $\tilde w$ on $Y$ whenever there exists a unique vector $D \pi_x w(x)$ for all $x\in\pi^{-1}(y)$ and all $y\in Y$. 

A smooth regular distribution $S$ is a smooth subbundle locally spanned by smooth and linear independent vector fields \cite{Lee03,KoMiSl93}. The distribution $\ker D \pi\,=\bigsqcup_{x\in X}\ker D \pi_x$ can be shown to be smooth, where $\bigsqcup$ denotes disjoint union. This follows from the existence of a smooth local coframe (c.f. \cite{Lee03}) spanned by $m$ smooth 1-forms $(d\pi^1,\dots,d\pi^m)$ that annihilate $\ker D \pi\,$. The distribution $\ker D \pi\,$ is regular if and only if $\pi$ is a submersion. An integral submanifold $W$ of $S$ is an immersed submanifold of $X$ such that $TW\subseteq S|_W$. It has the maximal integral submanifold property if $TW= S|_W$ and $W$ is not contained in any other integral submanifold. Following Sussmann and Stefan \cite{Suss73,Stef74}, $S$ is integrable if every point of $X$ is contained in an integral submanifold with the maximal integral submanifold property. Frobenius theorem states that a regular distribution is integrable if and only if the space of its sections is closed under the Lie bracket, i.e., $S$ is involutive. 
The distribution $\ker D \pi\,$ is by construction an integrable distribution where $\{\pi^{-1}(x)\}_{x\in X}$ are the maximal integral submanifolds of maximal dimension.

Let $v$ and $w$ be two vector fields where $v$ generates the flow $\Phi$. The Lie derivative of $w$ in the direction $v$ is defined by
\begin{equation}
 \mathcal L_vw:=\frac{d}{dt}\Big|_{t=0} D\Phi_{-t}w\circ\Phi_{t}\;.\label{eq:lieder}
\end{equation}
The Lie derivative $\mathcal L_v:\Gamma^{\infty}(X,TX)\to \Gamma^{\infty}(X,TX)$ is a derivation on the $C^\infty$-module of vector fields. One can also show \cite{Lee03} that $\mathcal L_vw=\lb v,w\rb$, where $\lb\cdot,\cdot\rb:\Gamma^{\infty}(X,TX)\times \Gamma^{\infty}(X,TX)\to \Gamma^{\infty}(X,TX)$ is the Lie bracket.

A linear \emph{connection} on a vector bundle $E\to X$ is a map $\nabla^E:\Gamma^\infty(X,TX)\times \Gamma^\infty(X,E)\to \Gamma^\infty(X,E)$ which is tensorial in the first argument and for any $v\in \Gamma^\infty(X,TX)$ the map $\nabla^E_v:=\nabla^E (v,\cdot)$ is a derivation on $\Gamma^\infty(X,E)$. A \textit{partial connection} over a subbundle $S\subset TX$ is a map $\mathring \nabla^E:\Gamma^\infty(X,S)\times \Gamma^\infty(X,E)\to \Gamma^\infty(X,E)$. A notable partial connection is the Bott connection \cite{Bott72} defined over an integrable subbundle $S$ on the quotient bundle $Q=TX/S$. Let $\rho$ be the corresponding quotient map; then the connection is defined by
\begin{equation}\label{eq:Bott}
\mathring\nabla^{Q}_w\big[v\big]=\rho
\lb w,\rho^{-1}\big[v\big] \rb \;,
\end{equation}
where the right inverse $\rho^{-1}\big[v\big]=v'+w'$ picks out smoothly an arbitrary representative of the equivalence class, with $w'\in\ker \rho$. Since the Lie bracket is bilinear and $S$ is involutive, this is independent of the choice $w'$ and thus well defined. Only the term that is linear in $w$ survives the projection by $\rho$ and so the requirements of a veritable connection are satisfied. The partial connection can be completed to a full connection \cite{Tond97}. For example, one could introduce a Riemannian metric which splits $TX=S\oplus S^\perp$ and decomposes 
$g=g_S\otimes g_{S^\perp}$. The corresponding Levi-Civita connection $\bar\nabla$ restricted to $Q$ completes $\mathring\nabla^{Q}$ to a metric connection:
\begin{equation*}
 \nabla_w^{Q}=\mathring\nabla^{Q}_w + \bar\nabla^{|Q}_w\;.
\end{equation*}
This is sometimes called an \textit{adapted connection}.


\subsection{Lumpability and commutativity}
\label{ssc:Char}

In this section we state two  necessary and sufficient conditions for exact lumpability. Henceforth $\pi$ is a smooth surjective submersion and $v\in \Gamma^\infty(X,TX)$ is a smooth vector field generating the flow $\Phi:\mathcal T_X\subseteq \mathbb R \times X \to X$, where $\mathcal T_X:=\{(\mathcal T_x,x):\mathcal T_x\subseteq \mathbb R, x\in X\}$ is the domain of the flow and $\mathcal T_x$ contains an open interval around $0$. We denote by $\Phi_x:\mathcal T_x\to X$ the integral curves with starting point $x$, and by $\Phi_t: \mathcal X_t\to X$ the flow map parametrized by time, with $\mathcal X_t:=\{x\in X: t \in \mathcal T_x\}$ being the domain of definition. We start by giving a precise definition of lumpability.

\begin{mydef}[Exact Smooth Lumpability]
The system
\begin{equation}
	\frac{d}{dt}\Big|_{t=s} \Phi=v\circ\Phi_s \label{eq:dydef}
\end{equation}
is called exactly smoothly lumpable (henceforth exactly lumpable) for $\pi$ iff there exists a smooth vector field $\tilde v\in\Gamma^\infty(Y,TY)$ such that the dynamics of $\Theta=\pi\circ \Phi$ is governed by
\begin{equation}
\frac{d}{dt}\Big|_{t=s} \Theta = \tilde v\circ \Theta_s\;. \label{eq:lumpedode}
\end{equation}
\end{mydef}
The Picard-Lindel\"of theorem guarantees a unique solution of \eqref{eq:dydef} for sufficiently small times for all $x$, since $v$ is smooth and in particular Lipschitz. It exists for all times of definition $\mathcal T_{x}\subseteq \mathbb R$. Formally equation (\ref{eq:dydef}) should be understood as the pushforward of the section $\frac{\partial}{\partial t}$ on $\mathcal T_X$ by $\Phi$:
\begin{align*}
\frac{d}{dt}\Big|_{t=s}\Phi:&=\;
\big(D \Phi \big)\big|_{s}\,\frac{\partial}{\partial t}\;,
\end{align*}
and likewise for (\ref{eq:lumpedode}). The flow of the vector field $\tilde v\in\Gamma^\infty(Y,TY)$ is denoted by $\tilde \Phi:\mathcal T_Y\to Y$, where again $\mathcal T_Y:=\{(\widetilde{\mathcal T_y},y):(-\epsilon,\epsilon)\subseteq\widetilde{\mathcal T_y}\subseteq \mathbb R, y\in Y\}$ is the domain of the flow. There is no a priori connection between ${\mathcal T_x}$ and $\widetilde{\mathcal T_y}$. However, we will see later that Proposition \ref{pr:main2} relates the two.

\begin{myprop}
\label{pr:main1}
The system 
\eqref{eq:dydef} is exactly lumpable for $\pi$ iff there exists a smooth vector field $\tilde v\in\Gamma^\infty(Y,TY)$ such that
\begin{equation}
	D \pi_x v(x)=\tilde v\circ\pi(x) \label{eq:prop1}
\end{equation}
for all $x\in X$.
\end{myprop}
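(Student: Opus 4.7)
The plan is to prove both directions by applying the chain rule to the composition $\Theta_t = \pi \circ \Phi_t$, using that both the definition of exact lumpability and the proposition postulate the existence of the same smooth vector field $\tilde v \in \Gamma^\infty(Y,TY)$. The content of the proposition is thus the equivalence between the dynamical condition \eqref{eq:lumpedode} and the pointwise pushforward identity \eqref{eq:prop1}, with $\tilde v$ being a parameter common to both sides.

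For the forward direction, I would assume \eqref{eq:lumpedode} and use the chain rule to rewrite its left-hand side as $D\pi_{\Phi_s(x)}\cdot \frac{d}{dt}\big|_{t=s}\Phi_t(x)$, which by \eqref{eq:dydef} equals $D\pi_{\Phi_s(x)}\, v(\Phi_s(x))$. Equating this with $\tilde v(\pi(\Phi_s(x)))$ and specialising to $s = 0$, so that $\Phi_0(x)=x$, produces \eqref{eq:prop1} at the arbitrary point $x \in X$.

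For the converse, I would assume \eqref{eq:prop1} and differentiate $\Theta_t = \pi \circ \Phi_t$ directly. The chain rule together with \eqref{eq:dydef} yields $\frac{d}{dt}\big|_{t=s}\Theta_t(x) = D\pi_{\Phi_s(x)}\, v(\Phi_s(x))$, and applying \eqref{eq:prop1} at the point $\Phi_s(x) \in X$ converts this into $\tilde v(\pi(\Phi_s(x))) = \tilde v(\Theta_s(x))$, which is \eqref{eq:lumpedode}.

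No substantial obstacle arises: the chain rule applies in the form $D(\pi\circ\Phi_t)_x = D\pi_{\Phi_t(x)} \circ D(\Phi_t)_x$ on the natural domain $\mathcal X_t$, smoothness of $\Theta$ follows from smoothness of $\pi$ and $\Phi$, and smoothness of $\tilde v$ is assumed on both sides, so nothing needs to be constructed. The one conceptual observation is that evaluating \eqref{eq:lumpedode} at $s = 0$ with arbitrary starting point already forces the pointwise identity \eqref{eq:prop1}, while the converse is obtained by applying the pointwise identity along each trajectory.
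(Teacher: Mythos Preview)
Your proposal is correct and follows essentially the same argument as the paper: the paper also computes $\frac{d}{dt}\big|_{t=0}\Theta_x = D\pi_x v(x)$ via the chain rule to obtain \eqref{eq:prop1} from exact lumpability, and for the converse it evaluates \eqref{eq:prop1} at the point $\Phi_s(x)$ to recover \eqref{eq:lumpedode}. The only cosmetic difference is that the paper sets $t=0$ at the outset of the forward direction rather than first writing the identity at general $s$ and then specialising.
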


\begin{proof} 
Consider the time derivative of $\Theta$:
\begin{align*}
\frac{d}{d t}\Big|_{t=0} \Theta_x&=
D (\pi\circ\Phi_x)\big|_0\,\frac{\partial}{\partial t}=
D \pi_x v(x), \label{eq:compare1}
\end{align*}
By exact lumpability, $\Theta$ is generated by \eqref{eq:lumpedode}, so $\frac{d}{d t}\big|_{t=0} \Theta_x=\tilde v\circ \Theta_0(x)=\tilde v\circ \pi(x)$. Therefore, exact lumpability implies \eqref{eq:prop1}. On the other hand, if we demand \eqref{eq:prop1} for all $x$ and in particular for $\Phi_s(x)$, then
\begin{align*}
 D\pi_{\Phi_s(x)}v(\Phi_s(x))=\tilde v\circ\pi\circ \Phi_s(x)\;.
\end{align*}
The right hand side equals $\tilde v\circ \Theta_s(x)$ and the left hand side equals $\frac{d}{dt}\big|_{t=s}\Theta_t(x)$, which implies exact lumpability.
\end{proof}

\begin{rem}
Alternatively, we can say that \eqref{eq:dydef} is exactly lumpable for $\pi$ iff there exists a smooth vector field $\tilde v\in\Gamma^\infty(Y,TY)$ such that $\tilde v$ and $v$ are $\pi$-related. Proposition \ref{pr:main1} can be formulated as a commutative diagram
\begin{center}
\begin{tikzpicture}
    \node (Y1) at (0,0) {$Y$};
    \node[right=of Y1] (Y2) {$TY$};
    \node[below=of Y2] (X2) {$TX$};
    \node[below=of Y1] (X1) {$X$};
    \draw[->] (Y1)--(Y2) node [midway,above] {$\tilde v$};
    \draw[->] (X1)--(X2) node [midway,above] {$v$};
    \draw[->] (X2)--(Y2) node [midway,right] {$D\pi$}; 
    \draw[->] (X1)--(Y1) node [midway,left] {$\pi$};
\end{tikzpicture}
\end{center}
which reads $\tilde v (\pi(x))=D \pi_x v(x)$ for all $x\in X$.
\end{rem}


\begin{myprop}
\label{pr:main2}
The system 
\eqref{eq:dydef} is exactly lumpable for $\pi$ iff 
for all $y\in Y$ the time domain $\widetilde{\mathcal T}_y=\mathcal T_{x}$ is independent of the choice $x\in\pi^{-1}(y)$, and 
\begin{equation}
\tilde \Phi_t\circ\pi(x)=\pi\circ \Phi_t (x) \label{eq:prop2}
\end{equation} for all $x\in X$ and all times $t\in\widetilde{\mathcal T}_{\pi(x)}$.
\end{myprop}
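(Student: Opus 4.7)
The plan is to reduce both implications to Proposition \ref{pr:main1} together with the uniqueness of integral curves, since \eqref{eq:prop2} is just the integrated form of the infinitesimal $\pi$-relatedness $D\pi_x v(x) = \tilde v(\pi(x))$.

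For the forward direction, I would assume exact lumpability and invoke Proposition \ref{pr:main1} to obtain a smooth $\tilde v$ with $D\pi\circ v = \tilde v\circ\pi$. Fix $x\in X$ and consider the curve $\gamma(t):=\pi\circ\Phi_t(x)$, defined on $\mathcal{T}_x$. The chain rule and $\pi$-relatedness give
\begin{equation*}
\dot\gamma(t) \;=\; D\pi_{\Phi_t(x)}\, v(\Phi_t(x)) \;=\; \tilde v\bigl(\pi\circ\Phi_t(x)\bigr) \;=\; \tilde v(\gamma(t)),
\end{equation*}
so $\gamma$ is an integral curve of $\tilde v$ issuing from $\pi(x)$. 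The Picard--Lindel\"of uniqueness theorem then forces $\gamma(t)=\tilde\Phi_t(\pi(x))$ on their common time interval, which is exactly \eqref{eq:prop2}. Because the projected curve $\gamma$ lives on all of $\mathcal{T}_x$ inside the maximal $\tilde v$-flow, maximality of $\widetilde{\mathcal{T}}_{\pi(x)}$ gives $\mathcal{T}_x\subseteq\widetilde{\mathcal{T}}_{\pi(x)}$; the reverse inclusion and the independence of $\mathcal{T}_x$ from the choice of $x\in\pi^{-1}(y)$ should come from local lifting of $\tilde v$-trajectories through the fibers, using that $\pi$ is a submersion.

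For the backward direction, the argument is short: differentiate \eqref{eq:prop2} at $t=0$. The left-hand side contributes $\tilde v(\pi(x))$ and the right-hand side, by the chain rule, contributes $D\pi_x v(x)$. Equating the two recovers the hypothesis of Proposition \ref{pr:main1}, which yields exact lumpability.

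The main obstacle I anticipate is the time-domain equality $\widetilde{\mathcal{T}}_{\pi(x)} = \mathcal{T}_x$ and its fiberwise independence, because $\pi$-relatedness is a purely infinitesimal condition and a priori only supplies flow commutativity on the intersection of the two maximal intervals. Ruling out the pathology in which the $v$-flow breaks before the $\tilde v$-flow does is exactly where the submersion hypothesis must carry its weight; beyond this point the proof is routine chain-rule manipulation together with an appeal to Proposition \ref{pr:main1}.
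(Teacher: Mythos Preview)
Your approach is essentially the paper's: differentiate \eqref{eq:prop2} at $t=0$ for one implication, and invoke uniqueness of integral curves for the other. The only cosmetic difference is that the paper starts the forward direction directly from Definition~1 (so that $\Theta_x=\pi\circ\Phi_x$ is \emph{by definition} an integral curve of $\tilde v$) rather than routing through Proposition~\ref{pr:main1}; the content is the same.

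Where you are more scrupulous than the paper is exactly on the time-domain equality. The paper, after showing $\tilde\Phi_{\pi(x)}(t)=\Theta_x(t)$ on $\mathcal T_x$, simply writes ``Since they are the same integral curves, $\widetilde{\mathcal T}_{\pi(x)}=\mathcal T_x$ for all $x$'' and stops. It does not argue the reverse inclusion $\widetilde{\mathcal T}_{\pi(x)}\subseteq\mathcal T_x$ or the fiberwise constancy of $\mathcal T_x$; only $\mathcal T_x\subseteq\widetilde{\mathcal T}_{\pi(x)}$ follows cleanly from maximality, as you note. So the obstacle you single out is precisely the point the paper glosses over.

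Your proposed remedy, however, does not close that gap. A surjective submersion has local sections but no global path-lifting property, so ``local lifting of $\tilde v$-trajectories through the fibers'' cannot by itself force the upstairs maximal interval to match the downstairs one. Concretely, remove a single point from $X=\mathbb R^2$, take $v=\partial/\partial x_1$ and $\pi(x_1,x_2)=x_1$: the system is exactly lumpable, yet $\mathcal T_x$ depends on the fiber point and is strictly smaller than $\widetilde{\mathcal T}_{\pi(x)}$ for the trajectory aimed at the deleted point. Thus the equality $\widetilde{\mathcal T}_{\pi(x)}=\mathcal T_x$ requires hypotheses beyond the bare submersion (e.g.\ properness of $\pi$ or completeness), and neither your sketch nor the paper supplies them. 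For the purposes of matching the paper, your argument already reproduces everything the paper actually proves.
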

\begin{proof}

One implication is obtained by taking time derivatives on both sides of \eqref{eq:prop2} at $t=0$ and using that $\tilde v$ is the generator of $\Tilde \Phi$. This gives rise to \eqref{eq:prop1} and by Proposition \ref{pr:main1} implies exact lumpability.
On the other hand, by the definition of exact lumpability, the curve $\Theta_x$ is an integral curve to $\tilde v$ for any $x$. There is another integral curve $\tilde\Phi_{\pi(x)}$ for $\tilde v$ which at $t=0$ coincides with $\Theta_x$. By the uniqueness of integral curves they must coincide, so $\tilde \Phi_{\pi(x)}(t)=\Theta_x(t)$ for all $t\in \mathcal T_x$ and all $x$. Since they are the same integral curves, $\widetilde{\mathcal T}_{\pi(x)}=\mathcal T_x$ for all $x$. This proves the proposition.
%
\end{proof}

\begin{rem}
Proposition \ref{pr:main2} can also be cast into a commutative diagram
\begin{center}
\begin{tikzpicture}
    \node (Y1) at (0,0) {$Y$};
    \node[right=of Y1] (Y2) {$Y$};
    \node[below=of Y2] (X2) {$X$};
    \node[below=of Y1] (X1) {$X$};
    \draw[->] (Y1)--(Y2) node [midway,above] {$\tilde \Phi_t$};
    \draw[->] (X1)--(X2) node [midway,above] {$\Phi_t$};
    \draw[->] (X2)--(Y2) node [midway,right] {$\pi$}; 
    \draw[->] (X1)--(Y1) node [midway,left] {$\pi$};
\end{tikzpicture}
\end{center}
which reads $\tilde \Phi_t\circ \pi=\pi\circ \Phi_t$ for all times of definition $t\in\widetilde{\mathcal T}_{\pi(x)}$ and all $x\in X$.
\end{rem}

\subsection{Lumpability and the vertical distribution}
\label{ssc:LVD}

In this section we discuss some relations between exact lumpability, invariant distributions, and the Bott connection. The lumping map $\pi:X\to Y$ gives rise to a subbundle $\ker D\pi\subseteq TX$ of the tangent bundle. This is called the \textit{vertical distribution}, which is integrable by construction and $\rho: TX\to TX/\ker D\pi$ is the corresponding quotient map. We start with a basic proposition.


\begin{myprop}
\label{pr:distr}
The distribution $\ker D\pi$ is invariant under the flow $\Phi$ iff the space of sections $\Gamma^\infty(X,\ker D\pi)$ is invariant under $\mathcal L_v$.
\end{myprop}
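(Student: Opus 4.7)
My plan is to prove the two implications separately, both by unpacking the definition of the Lie derivative in \eqref{eq:lieder} and using flow-invariance of the subspaces $\ker D\pi_x$ along trajectories.

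For the forward direction I assume that $\ker D\pi$ is $\Phi$-invariant, i.e.\ $D\Phi_t\bigl(\ker D\pi_x\bigr)\subseteq \ker D\pi_{\Phi_t(x)}$, which by invertibility of $D\Phi_t$ is equivalent to $D\Phi_{-t}\bigl(\ker D\pi_{\Phi_t(x)}\bigr)\subseteq \ker D\pi_{x}$. Taking any $w\in\Gamma^\infty(X,\ker D\pi)$, the curve $t\mapsto D\Phi_{-t}\,w(\Phi_t(x))$ therefore lies entirely in the vector subspace $\ker D\pi_x$. Since $\ker D\pi_x$ is a closed linear subspace of the finite-dimensional space $T_xX$, its tangent vector at $t=0$, namely $(\mathcal L_v w)(x)$, also lies in $\ker D\pi_x$. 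Thus $\mathcal L_v w\in\Gamma^\infty(X,\ker D\pi)$.

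The reverse direction is the main obstacle, since we have to upgrade infinitesimal invariance (of sections) to a finite-time statement (of fibers). I would fix $x\in X$, pick a smooth local frame $w_1,\dots,w_k$ for $\ker D\pi$ on a neighborhood of the orbit segment $\{\Phi_t(x)\}$, and use the hypothesis to write $\mathcal L_v w_i=\sum_j a_{ij}w_j$ for smooth functions $a_{ij}$ defined along the orbit. Setting $\tilde w_i(t):=D\Phi_{-t}\,w_i(\Phi_t(x))\in T_xX$, the group property of $\Phi$ together with the same computation as in the forward direction gives
\begin{equation*}
\tilde w_i'(t)=D\Phi_{-t}\bigl((\mathcal L_v w_i)(\Phi_t(x))\bigr)=\sum_j a_{ij}(\Phi_t(x))\,\tilde w_j(t).
\end{equation*}
This is a linear, time-dependent ODE for the tuple $(\tilde w_1,\dots,\tilde w_k)$ in the fixed vector space $T_xX$. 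Expanding each $\tilde w_i(t)$ in a basis of $T_xX$ whose first $k$ elements span $\ker D\pi_x$, the coefficients of the complementary basis vectors satisfy a linear ODE with zero initial data (because $\tilde w_i(0)=w_i(x)\in\ker D\pi_x$), hence vanish identically by uniqueness. Therefore $\tilde w_i(t)\in\ker D\pi_x$, i.e.\ $w_i(\Phi_t(x))\in D\Phi_t(\ker D\pi_x)$, for every $t$ in the domain of definition.

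Finally, since $w_1(\Phi_t(x)),\dots,w_k(\Phi_t(x))$ span $\ker D\pi_{\Phi_t(x)}$ and $D\Phi_t$ is a linear isomorphism preserving dimensions, I conclude $D\Phi_t(\ker D\pi_x)=\ker D\pi_{\Phi_t(x)}$, which is the desired flow-invariance. The argument extends from the coordinate neighborhood to the whole flow domain by covering the orbit with finitely many such charts and composing, which is routine. I expect the only delicate point to be ensuring that the local frame exists (which follows from the submersion property of $\pi$, giving $\ker D\pi$ the structure of a smooth regular subbundle as recalled in Section \ref{ssc:Pre}).
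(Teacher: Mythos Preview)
Your forward direction is essentially identical to the paper's argument: both rewrite flow invariance as $(D\Phi_{-t})\,w\circ\Phi_t\in\Gamma^\infty(X,\ker D\pi)$ and differentiate at $t=0$.

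Where you differ is that you actually supply the reverse implication. The paper's proof, as written, only argues the forward direction and leaves the passage from infinitesimal invariance back to finite-time invariance implicit. Your ODE argument---using a local frame $w_1,\dots,w_k$ for $\ker D\pi$, the identity $\frac{d}{dt}\bigl(D\Phi_{-t}\,w\circ\Phi_t\bigr)=D\Phi_{-t}\,(\mathcal L_v w)\circ\Phi_t$, and uniqueness for the resulting linear system---is correct and is the standard way to close this gap. The decoupling you exploit (for each complementary basis direction $e_\alpha$, the coefficients $c_{1\alpha},\dots,c_{k\alpha}$ satisfy a homogeneous linear ODE with zero initial data) is valid because the matrix $a_{ij}$ acts only on the frame index $i$, not on the ambient index $\alpha$. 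So your proof is more complete than the paper's while agreeing with it on the part they share.
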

\begin{proof}
$\ker D\pi$ is invariant under the flow if $(D\Phi_t)_x(\ker D\pi)_x\subseteq (\ker D\pi)_{\Phi_t(x)}$ for all $x,t$ where it is defined. Since $\Phi_t$ is a diffeomorphism, this condition is equivalent to $(D\Phi_{-t})_{\Phi_t(x)}(\ker D\pi)_{\Phi_t(x)}\subseteq (\ker D\pi)_{x}$. So, for any $w\in\Gamma^\infty(X,\ker D\pi)$, we have $ (D\Phi_{-t}) w\circ \Phi_t \in \Gamma^\infty(X,\ker D\pi)$. Taking time derivatives and evaluating at 0, we obtain that the Lie derivative \eqref{eq:lieder} of $v$ in the direction of $w$ is again a section of $\ker D\pi$.  
\end{proof}

We would like to define a derivative of the differential $D\pi$ to find further conditions. 

\begin{mydef}[Covariant derivative of the differential] \label{df:LieDerOfDiff}
Let $\nabla^H$ be a connection on $H=(\pi\circ\Phi)^{-1}TY\otimes T^*(X\times \mathbb R)$ and $v\in\Gamma^\infty(X,TX)$ with flow $\Phi$. Then
\begin{equation}
\mathcal L^{\nabla}_v D \pi \,:= \nabla^H_{\frac{\partial}{\partial t}}\; D (\pi \circ \Phi)\;\big|_0 \label{eq:LieDerOfDiff}
\end{equation}
is the covariant derivative with respect to $\nabla^H$ of the differential $D\pi$ in the direction $\frac{\partial}{\partial t}=(0,\frac{\partial}{\partial t})\in T(X\times \mathbb R)$ .
\end{mydef}

The covariant derivative takes the place of $\frac{d}{dt}$ and ensures that the map 
$D(\pi\circ\Phi_t):TX\to (\pi\circ\Phi_t)^{-1}TY$ is differentiated properly and covariantly. It is worth noting that this object behaves like a Lie derivative as we will see in \eqref{eq:Comparison}, but since $D\pi$ is not a tensor one cannot define a proper Lie derivative. Nevertheless, we will use the similar notation. 

We shall make the connection to the Lie derivative more apparent. Let $V\to X$ be a vector bundle, $L:TX\to V$ a vector bundle homomorphism, and $\theta:X\to X$ a diffeomorphism. Then there exists an induced linear map $\theta_\sharp L:TX\to V$ of $L$: 
\begin{equation*}
\theta_\sharp L:=L\circ D\theta^{-1}\;.\label{eq:pushforwardlinmap2}
\end{equation*} 
Analogously to the Lie derivative \eqref{eq:lieder} of sections on the tangent bundle, we can then define \eqref{eq:LieDerOfDiff} as 
\begin{equation*}
\mathcal L^{\nabla}_v D \pi \,:= \nabla^H_{\frac{\partial}{\partial t}}\;\big(\Phi_{-t}\big)_\sharp D \pi \circ \Phi_t\;\big|_0\;\label{eq:LieDerOfDiff2}
 \end{equation*}
 with respect to $\nabla^H$.  

In Definition \ref{df:LieDerOfDiff} one needs to specify a covariant derivative. This is of course unfortunate, because there are many options.  However it turns out that we are fortunate nevertheless, because there is a good choice which turns out to be closely related to the Bott connection. 
Given a connection $\nabla^{E}$ on $E\to Y$ and a map $\pi:X\to Y$, there is a unique \cite{EelLem83} 
connection $\pi^*\nabla^{\pi^{-1}E}$ on $\pi^{-1}E \to X$, called the pullback connection
\begin{equation*}
\pi^*\nabla^{\pi^{-1}E}_v(s\circ \pi)=\big(\nabla^{E}_{D\pi v}s\big)\circ \pi ,
\end{equation*}
defined for sections $s\in \Gamma^\infty(Y,E)$ and extended locally to arbitrary sections $\sum_{a}c^a (s_a\circ\pi)\in\pi^{-1}E$ by linearity, where $c^a\in C^\infty(X,\mathbb R)$ for all $a$. 
Given a tensor product bundle $H=H_1\otimes H_2$, connections $\nabla^{H_1}$ and $\nabla^{H_2}$ on $H_1$ and $H_2$ respectively induce a connection on $H$ as follows:
\begin{equation}
 \nabla^H(s_1\otimes s_2)=\nabla^{H_1}s_1\otimes s_2 + s_1\otimes \nabla^{H_1}s_2,  \label{eq:tensorproductconnection}
\end{equation}
where $s_1$ and $s_2$ are sections on $H_1$ and $H_2$, respectively.
For the next proposition we require the connections to be torsion free. Recall that $\nabla$ is called torsion free if $\nabla_vw-\nabla_wv=\lb v,w\rb$.

\begin{lem} \label{pr:torsionfree}
 Let $g:M\to N$ and $\bar\nabla^{TN}$ be a torsion-free connection on $TN$. Then 
 \begin{equation}
 g^*\bar\nabla^{g^{-1}TN}_{ w } D g \;v - g^*\bar\nabla^{g^{-1}TN}_{ v } D g \;w = D g \lb w,v\rb, \label{eq:Antilinearity}
 \end{equation}
 where $v,w$ are sections on $TM$.
\end{lem}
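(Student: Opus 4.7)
My plan is to verify the identity by a direct coordinate computation, since both sides are $C^\infty$-linear tensorial expressions in $v$ and $w$ (the $v,w$-dependence through Lie derivatives of their coefficients cancels out in the end) and the formula is local on $M$. Fix a point $p\in M$, choose coordinates $(x^i)$ on $M$ around $p$ and $(y^\alpha)$ on $N$ around $g(p)$, and write $v=v^i\partial_i$, $w=w^j\partial_j$, and $g^\alpha = y^\alpha\circ g$. Then $Dg\cdot v = v^i(\partial_i g^\alpha)\,\partial_\alpha$ viewed as a section of $g^{-1}TN$, where $\partial_\alpha$ here stands for $\partial/\partial y^\alpha \circ g$.

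Next I would expand $g^*\bar\nabla^{g^{-1}TN}_w(Dg\cdot v)$ using the defining property of the pullback connection,
\begin{equation*}
g^*\bar\nabla^{g^{-1}TN}_{\partial_j}(\partial_\alpha\circ g) = (\partial_j g^\beta)\,\Gamma^\gamma_{\beta\alpha}(g)\,\partial_\gamma,
\end{equation*}
where $\Gamma^\gamma_{\beta\alpha}$ are the Christoffel symbols of $\bar\nabla^{TN}$, together with the Leibniz rule applied to the scalar factor $v^i(\partial_i g^\alpha)$. Doing the same for $g^*\bar\nabla^{g^{-1}TN}_v(Dg\cdot w)$ and subtracting, the resulting expression splits into three groups of terms: (i) those involving derivatives of the coefficients $v^i,w^j$, (ii) those involving second partial derivatives $\partial_i\partial_j g^\alpha$, and (iii) those involving the Christoffel symbols.

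Group (ii) contains $(w^j v^i - v^j w^i)\,\partial_i\partial_j g^\alpha\,\partial_\alpha$, which vanishes by equality of mixed partials. Group (iii) contains $(w^j v^i - v^j w^i)(\partial_i g^\alpha)(\partial_j g^\beta)\Gamma^\gamma_{\beta\alpha}(g)\,\partial_\gamma$, which vanishes precisely because $\bar\nabla^{TN}$ is torsion-free, so $\Gamma^\gamma_{\beta\alpha}=\Gamma^\gamma_{\alpha\beta}$ and the symmetric Christoffel tensor is contracted against an antisymmetric coefficient. Group (i) collapses to
\begin{equation*}
\bigl(w^j\partial_j v^i - v^j\partial_j w^i\bigr)(\partial_i g^\alpha)\,\partial_\alpha = \lb w,v\rb^i (\partial_i g^\alpha)\,\partial_\alpha = Dg\,\lb w,v\rb,
\end{equation*}
which gives \eqref{eq:Antilinearity}.

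The only real obstacle is bookkeeping: one must carefully apply the pullback connection to a product $f\cdot(s\circ g)$ with $f\in C^\infty(M)$ so that the $\partial_j f$-terms, the second-derivative-of-$g$ terms, and the Christoffel terms are all separated cleanly. Once they are separated, the three cancellations are immediate, and the lemma reveals itself as a direct analogue of the classical statement ``the Hessian of a map is symmetric when the connections involved are torsion-free'', now phrased in terms of the pullback connection without introducing an auxiliary connection on $TM$.
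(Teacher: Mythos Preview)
Your coordinate computation is correct: the three groups of terms separate exactly as you describe, group~(ii) cancels by symmetry of second partials, group~(iii) cancels by the symmetry $\Gamma^\gamma_{\beta\alpha}=\Gamma^\gamma_{\alpha\beta}$ of a torsion-free connection, and group~(i) assembles into $Dg\,\lb w,v\rb$. The paper itself does not give a proof but simply refers the reader to Eells--Lemaire \cite{EelLem83}, where the same local calculation is carried out; so your argument fills in precisely what the citation points to.
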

\begin{proof}
 See page 6 of \cite{EelLem83}.
\end{proof}

\begin{myprop} 
\label{pr:simplify}
Let $\bar \nabla^{TY}$ and $\bar \nabla^{T^*(X\times\mathbb R)}$ be torsion-free connections and $\bar\nabla^H$ the tensor product connection \eqref{eq:tensorproductconnection}. Then
\begin{equation}
 \bar\nabla^{H}_{\frac{\partial}{\partial t}} D(\pi\circ\Phi)\;\big|_0\ w = \pi^*\bar\nabla^{\pi^{-1}TY}_w (D\pi v)\;. \label{eq:simplify}
\end{equation}
\end{myprop}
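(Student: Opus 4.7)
The plan is to reduce Proposition \ref{pr:simplify} to a single application of Lemma \ref{pr:torsionfree} for the composite $g = \pi \circ \Phi : X \times \mathbb R \to Y$. The key observation is that if I lift $w$ from $X$ to the constant-in-$t$ vector field $(w, 0)$ on $X \times \mathbb R$, then $\lb (w, 0), \frac{\partial}{\partial t} \rb = 0$, so Lemma \ref{pr:torsionfree} collapses to the commutation identity
\[
g^*\bar\nabla^{g^{-1}TY}_{\frac{\partial}{\partial t}}\bigl(Dg\, (w, 0)\bigr) \;=\; g^*\bar\nabla^{g^{-1}TY}_{(w, 0)}\bigl(Dg\, \tfrac{\partial}{\partial t}\bigr).
\]
It then suffices to identify each side, at $t=0$, with the corresponding side of \eqref{eq:simplify}.

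For the right-hand side of this commutation identity, the flow equation \eqref{eq:dx} gives $Dg\, \frac{\partial}{\partial t}\big|_{(x,t)} = D\pi_{\Phi_t(x)}\, v(\Phi_t(x))$, whose restriction to the slice $\{t=0\}$ is exactly the section $D\pi\, v$ of $\pi^{-1}TY$ over $X$. Because $(w, 0)$ is tangent to this slice and $g|_{t=0} = \pi$, naturality of the pullback construction identifies $g^*\bar\nabla^{g^{-1}TY}_{(w, 0)}(Dg\, \frac{\partial}{\partial t})|_0$ with $\pi^*\bar\nabla^{\pi^{-1}TY}_w(D\pi\, v)$, which is the desired right-hand side of \eqref{eq:simplify}.

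For the left-hand side of the commutation identity I would apply the Leibniz rule for the tensor-product connection $\bar\nabla^H$:
\[
g^*\bar\nabla^{g^{-1}TY}_{\frac{\partial}{\partial t}}\bigl(Dg\, (w, 0)\bigr) \;=\; \bigl(\bar\nabla^H_{\frac{\partial}{\partial t}} Dg\bigr)(w, 0) + Dg\, \bar\nabla^{T(X\times\mathbb R)}_{\frac{\partial}{\partial t}}(w, 0).
\]
With the natural product-compatible choice of connection on $T(X\times\mathbb R)$ (dual to a product-compatible $\bar\nabla^{T^*(X\times\mathbb R)}$), the lift $(w, 0)$ is $\frac{\partial}{\partial t}$-parallel, so the second term vanishes at $t=0$. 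What remains is $(\bar\nabla^H_{\frac{\partial}{\partial t}} D(\pi\circ\Phi))|_0\, w$, which by Definition \ref{df:LieDerOfDiff} is exactly $\mathcal L^\nabla_v D\pi\, w$. Combining with the identification of the right-hand side yields \eqref{eq:simplify}.

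The delicate point I would flag is the vanishing of the Leibniz remainder $Dg\, \bar\nabla^{T(X\times\mathbb R)}_{\frac{\partial}{\partial t}}(w, 0)$. This is forced by a product-like choice of torsion-free connection on $T^*(X\times\mathbb R)$ under which $\frac{\partial}{\partial t}$ is parallel and spatial lifts are $t$-parallel; such a choice exists and is natural, but the statement as written leaves it implicit. Once this compatibility is fixed, the entire proof reduces to one invocation of Lemma \ref{pr:torsionfree} together with the routine identification of pullbacks along the inclusion $X \hookrightarrow X \times \{0\}$.
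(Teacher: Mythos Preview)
Your proposal is correct and follows essentially the same route as the paper: both apply Lemma \ref{pr:torsionfree} to $g=\pi\circ\Phi$ on $X\times\mathbb R$, use $\lb(w,0),\tfrac{\partial}{\partial t}\rb=0$, and invoke the Leibniz rule for the tensor-product connection to kill the remainder term $Dg\,\bar\nabla^{T(X\times\mathbb R)}_{\partial/\partial t}(w,0)$. The only cosmetic difference is the order---the paper performs the Leibniz split first and then applies the lemma, whereas you apply the lemma first and then split---and you are right to flag that the vanishing of the remainder tacitly requires a product-compatible choice of connection on $T(X\times\mathbb R)$, which the paper simply asserts by calling $w$ and $\tfrac{\partial}{\partial t}$ ``orthogonal.''
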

\begin{proof}
 The proof follows \cite{EelLem83} in the first part. 
With some abuse of notation, we use $w(x,t)=(w(x),0)\in T_{(x,t)}(X\otimes\mathbb R)$ and 
 $\frac{\partial}{\partial t}=(0,1)\in T_{(x,t)}(X\otimes\mathbb R)$. Then
\begin{align}
\bar\nabla^{H}_{\frac{\partial}{\partial t}} D(\pi\circ\Phi)\Big|_0 \! w
&=
\pi^*\bar\nabla^{(\pi\circ\Phi)^{-1}TY}_{\frac{\partial}{\partial t}} D(\pi\circ\Phi) w\Big|_0\!
-
D(\pi\circ\Phi)\;
\bar\nabla^{T(X\times\mathbb R)}_{\frac{\partial}{\partial t}} w \label{eq:temp1}
\end{align}
The second term vanishes because $\bar\nabla^{T(X\times\mathbb R)}=\bar\nabla^{TX\oplus T\mathbb R}$ and $w$ and $\frac{\partial}{\partial t}$ are orthogonal. 
Now we use Lemma \ref{pr:torsionfree} with $M=X\times \mathbb R$, $N=Y$, and $g=\pi\circ\Phi$, as well as the fact that $\bar\nabla^{TY}$ is torsion free, to obtain (p.6 \cite{EelLem83})
\begin{equation*}
\bar\nabla^{TY}_{D(\pi\circ\Phi) \frac{\partial}{\partial t}  } D(\pi\circ\Phi)w -
\bar\nabla^{TY}_{D(\pi\circ\Phi) w } D(\pi\circ\Phi)\frac{\partial}{\partial t} = D(\pi\circ \Phi) \Big[\!\!\Big[\frac{\partial}{\partial t},w\Big]\!\!\Big]. 
\end{equation*}
This vanishes because $w$ doesn't depend on $t$.
The pullback of this equation allows us to rewrite  \eqref{eq:temp1} as
\begin{align}
\bar\nabla^{H}_{\frac{\partial}{\partial t}} D(\pi\circ\Phi)\;\big|_0 w=&
\pi^*\bar\nabla^{(\pi\circ\Phi)^{-1}TY}_{ w } D(\pi\circ\Phi)\frac{\partial}{\partial t}\Big|_0\nonumber\\
=&
\pi^*\bar\nabla^{\pi^{-1}TY}_{ w } D\pi v . \nonumber
\end{align}
The last term is in principle over $T(X\times\mathbb R)$ but after having set $t=0$ we can omit the $T\mathbb R$ part.
\end{proof}

Lemma \ref{pr:torsionfree} and Proposition \ref{pr:simplify} show the analogy between $\mathcal L^{\bar\nabla}_v D \pi$ and the Lie derivative for torsion-free connections. Upon substitution of \eqref{eq:LieDerOfDiff} into \eqref{eq:simplify}, equation \eqref{eq:Antilinearity} reads
\begin{equation}
 \pi^*\bar\nabla^{\pi^{-1}TY}_{ v } D\pi w =  (\mathcal L^{\bar\nabla}_v D \pi) w + D\pi \mathcal L_vw,  \label{eq:Comparison}
\end{equation}
which should be compared to
\begin{equation*}
 \mathcal L_v \langle d\pi,w\rangle = \langle \mathcal L_v d\pi,w\rangle +  \langle d\pi, \mathcal L_v w\rangle, 
\end{equation*}
where $\pi:X\to \mathbb R$ is a real-valued function, $d\pi$ is a differential one-form, and $\langle\cdot,\cdot\rangle:T^*X\times TX\to \mathbb R$ is the natural pairing of tangent and co-tangent vectors. 

The linear map $\mathcal L^{\bar\nabla}_v D \pi :TX\to \pi^{-1}TY$ is a vector bundle homomorphism and the kernel $\ker \mathcal L^{\bar\nabla}_v D \pi$ is a smooth distribution, which can be checked by viewing $\mathcal L^{\bar\nabla}_v D \pi$ as a differential one-form: On each pullback patch $U\cap \pi^{-1}V\subseteq X$ with local coordinates $\tilde \psi:V\subseteq Y\to \mathbb R^m$,
one constructs locally a set of one-forms
\begin{equation}
\sigma^a:=
(\mathcal L^{\bar\nabla}_v D \pi)^a=
 d(D\pi v)^a+\bar\Gamma^a_{bc}(D\pi v)^cd\pi^b\label{eq:coframe}
\end{equation}
where $a,b,c$ are the indices of the local coordinates and $\bar\Gamma^a_{bc}$ is the Christoffel symbol of $\bar \nabla$. Here and in the remainder of the article, we use the convention that repeated indices are summed over, unless stated otherwise. Since $\pi$ has full rank, $(\sigma^1,\dots,\sigma^m)$ spans a smooth $m$-dimensional local co-frame. We have $\langle\sigma^a,w\rangle=((\mathcal L^{\bar\nabla}_v D\pi) w)^a$; so, this co-frame annihilates vectors in $\ker\mathcal L^{\bar\nabla}_v D\pi$.

The motivation for the Definition \ref{df:LieDerOfDiff} partly stems from the following two propositions:

\begin{myprop}\label{pr:LieDerOfDiff}
The distribution $\ker D\pi$ is invariant under the flow $\Phi_t$ iff the space of sections $\Gamma^\infty(X,\ker D\pi) \subseteq \Gamma^\infty(X,\ker\mathcal L^{\bar\nabla}_v D \pi)$.
\end{myprop}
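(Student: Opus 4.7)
The plan is to reduce the statement to the identity (\ref{eq:Comparison}) and then invoke Proposition \ref{pr:distr}. The identity provides a direct algebraic bridge between the Lie derivative $\mathcal L_v$ acting on sections of $TX$ and the covariant derivative $\mathcal L^{\bar\nabla}_v D\pi$ acting on $\pi^{-1}TY$, precisely when the argument is a section of $\ker D\pi$.

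First I would take an arbitrary $w \in \Gamma^\infty(X,\ker D\pi)$. Since $D\pi\, w$ is then the zero section of $\pi^{-1}TY$, and since $\pi^*\bar\nabla^{\pi^{-1}TY}_v$ is a $\mathbb{R}$-linear derivation, the left-hand side of (\ref{eq:Comparison}) vanishes. This collapses the identity to the clean relation
\begin{equation*}
(\mathcal L^{\bar\nabla}_v D\pi)\, w \;=\; -\, D\pi\, \mathcal L_v w.
\end{equation*}
Hence, for such $w$, membership in $\Gamma^\infty(X,\ker \mathcal L^{\bar\nabla}_v D\pi)$ is equivalent to $D\pi\, \mathcal L_v w = 0$, i.e.\ to $\mathcal L_v w \in \Gamma^\infty(X,\ker D\pi)$.

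Next, I would observe that the inclusion $\Gamma^\infty(X,\ker D\pi)\subseteq \Gamma^\infty(X,\ker \mathcal L^{\bar\nabla}_v D\pi)$ holds if and only if $\mathcal L_v w \in \Gamma^\infty(X,\ker D\pi)$ for every $w\in \Gamma^\infty(X,\ker D\pi)$, which is precisely the statement that $\Gamma^\infty(X,\ker D\pi)$ is invariant under $\mathcal L_v$. Then Proposition \ref{pr:distr} finishes the argument, equating this invariance with the flow invariance of the distribution $\ker D\pi$.

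No real obstacle arises, since the hypotheses needed for (\ref{eq:Comparison}) — the torsion-freeness of $\bar\nabla^{TY}$ and $\bar\nabla^{T^*(X\times\mathbb R)}$ — are already in force from Proposition \ref{pr:simplify}. The only subtlety worth flagging explicitly in the write-up is that one must apply the identity to \emph{sections} (so that $\mathcal L_v w$ is defined and $D\pi\, w$ is a well-defined section of $\pi^{-1}TY$), and that the conclusion for sections is equivalent to the pointwise statement because $\ker D\pi$ is a smooth subbundle, so its sections separate points fibre-wise.
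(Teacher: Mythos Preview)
Your proposal is correct and follows essentially the same route as the paper: reduce to Proposition~\ref{pr:distr} and use identity~\eqref{eq:Comparison} with $D\pi\,w=0$ to obtain $(\mathcal L^{\bar\nabla}_v D\pi)\,w=-D\pi\,\mathcal L_v w$, whence the inclusion is equivalent to $\mathcal L_v$-invariance of $\Gamma^\infty(X,\ker D\pi)$. Your write-up is in fact slightly more careful than the paper's terse version, which compresses the conclusion to ``$(\mathcal L^{\bar\nabla}_v D\pi)w=0\iff \mathcal L_v w=0$'' where what is really meant (and what you state) is $\mathcal L_v w\in\Gamma^\infty(X,\ker D\pi)$.
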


\begin{proof}
By Proposition \ref{pr:distr}, the distribution $\ker D\pi$ is invariant under the flow $\Phi_t$ iff the space of sections $\Gamma^\infty(X,\ker D\pi)$ is invariant under $\mathcal L_v$. 
By \eqref{eq:Comparison}, if $w\in \Gamma^\infty(X,\ker D\pi)$ then $(\mathcal L^{\bar\nabla}_v D \pi)w=0 \iff \mathcal L_vw=0$.
\end{proof}

A slightly stronger version that implies Proposition \ref{pr:LieDerOfDiff} is the following.
\begin{myprop}\label{pr:InvarUnderLvDpi}
The distribution $\ker D\pi$ is invariant under the flow $\Phi_t$ iff $\ker D\pi \subseteq\ker\mathcal L^{\bar\nabla}_v D \pi$.
\end{myprop}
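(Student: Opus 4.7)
The plan is to use the comparison identity \eqref{eq:Comparison} together with Proposition \ref{pr:distr} as the main engine, and then upgrade the statement from the level of sections (as in Proposition \ref{pr:LieDerOfDiff}) to the pointwise level using the fact that $\ker D\pi$ is a smooth regular subbundle.

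First I observe that for any $w\in\Gamma^\infty(X,\ker D\pi)$ we have $D\pi\, w=0$, so applying the pullback connection gives $\pi^*\bar\nabla^{\pi^{-1}TY}_v D\pi\, w=0$ as well. Substituting into \eqref{eq:Comparison} yields the key reduced identity
\begin{equation*}
(\mathcal L^{\bar\nabla}_v D\pi)\,w \;=\; -\,D\pi\,\mathcal L_v w \qquad \text{for all } w\in\Gamma^\infty(X,\ker D\pi).
\end{equation*}
From this identity the two conditions, $(\mathcal L^{\bar\nabla}_v D\pi) w=0$ and $D\pi\,\mathcal L_v w=0$ (i.e.\ $\mathcal L_v w\in \Gamma^\infty(X,\ker D\pi)$), are manifestly equivalent on sections of $\ker D\pi$.

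For the ``only if'' direction, assume $\ker D\pi$ is flow-invariant. By Proposition \ref{pr:distr}, $\mathcal L_v w\in\Gamma^\infty(X,\ker D\pi)$ for every $w\in\Gamma^\infty(X,\ker D\pi)$, so the right-hand side of the reduced identity vanishes, and hence $(\mathcal L^{\bar\nabla}_v D\pi)w=0$ for every such section. To conclude the pointwise inclusion $\ker D\pi\subseteq\ker\mathcal L^{\bar\nabla}_v D\pi$, I note that since $\pi$ is a submersion the vertical distribution is a smooth regular subbundle, so any $w_x\in(\ker D\pi)_x$ extends to a local smooth section of $\ker D\pi$; evaluating the previous equality at $x$ gives $(\mathcal L^{\bar\nabla}_v D\pi)_x w_x=0$.

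For the ``if'' direction, assume $\ker D\pi\subseteq\ker\mathcal L^{\bar\nabla}_v D\pi$ pointwise. Then in particular $(\mathcal L^{\bar\nabla}_v D\pi)w=0$ for every $w\in\Gamma^\infty(X,\ker D\pi)$, so by the reduced identity $D\pi\,\mathcal L_v w=0$, i.e.\ $\mathcal L_v w\in\Gamma^\infty(X,\ker D\pi)$. Invoking Proposition \ref{pr:distr} in the opposite direction gives flow-invariance of $\ker D\pi$. The mild subtlety in the argument is the routine passage between the pointwise and section formulations, which is why the regularity of the vertical distribution (guaranteed by $\pi$ being a submersion) is needed; the comparison identity \eqref{eq:Comparison} does all the substantive work.
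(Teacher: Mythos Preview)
Your argument is correct. It is, however, a different route from the paper's own proof. The paper argues directly from the definition of $\mathcal L^{\bar\nabla}_v D\pi$ as a covariant time-derivative: flow invariance of $\ker D\pi$ means precisely that $D(\pi\circ\Phi_t)\,w_x=0$ for all $t$ whenever $w_x\in(\ker D\pi)_x$, and differentiating covariantly in $t$ at $t=0$ yields $(\mathcal L^{\bar\nabla}_v D\pi)w_x=0$; this avoids both Proposition~\ref{pr:distr} and the comparison identity~\eqref{eq:Comparison}. Your approach instead recycles the machinery already in place: you reduce the statement to Proposition~\ref{pr:LieDerOfDiff} via~\eqref{eq:Comparison} and Proposition~\ref{pr:distr}, and then observe that the section-level and pointwise formulations coincide because $\ker D\pi$ is a smooth regular subbundle. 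This has the virtue of making transparent that Proposition~\ref{pr:InvarUnderLvDpi} is not genuinely stronger than Proposition~\ref{pr:LieDerOfDiff} once regularity is used, whereas the paper's direct argument is self-contained and does not pass through the torsion-free identity~\eqref{eq:Comparison} (and hence through Lemma~\ref{pr:torsionfree} and Proposition~\ref{pr:simplify}).
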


\begin{proof}
$\ker D\pi$ is invariant under the flow if $(D\Phi_t)_x(\ker D\pi)_x\subseteq (\ker D\pi)_{\Phi_t(x)}$ for all $x,t$ where it is defined. So, $(D\pi)_{\Phi_t(x)}(D\Phi_t)_x \,w_x=0$ for $w_x\in(\ker D\pi)_x$, or in other words $(D\pi)_{\Phi_t(x)}(D\Phi_t)_x $ maps $(\ker D\pi)_x$ to
$(\ker D\pi)_{\Phi_t(x)}$ so that $D(\pi\circ \Phi_t)w$ remains 0 for any $w\in \ker D\pi $. In infinitesimal terms this means that the covariant derivative \eqref{eq:LieDerOfDiff} vanishes, $\bar \nabla^H_{\frac{\partial}{\partial t}}\,D(\pi\circ\Phi_t) w \big|_0=(\mathcal L^{\bar\nabla}_v D \pi)w=0$ on $w$. 
\end{proof}

%
%

We would now like to define a partial connection on the pullback bundle $\pi^{-1}TY$ over sections of $\ker D\pi$. The next proposition establishes an isomorphism that will help us define the partial connection.

\begin{myprop}
There is a vector bundle isomorphism $\varphi:\pi^{-1}TY\to TX/\ker D\pi $.
\end{myprop}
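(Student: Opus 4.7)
The plan is to exhibit the isomorphism using the differential $D\pi$ itself, which is essentially an application of the first isomorphism theorem for vector bundles. The natural candidate for $\varphi^{-1}$ is the map induced fiberwise by the differential:
\begin{equation*}
\varphi^{-1}: TX/\ker D\pi \longrightarrow \pi^{-1}TY, \qquad [w_x] \longmapsto D\pi_x w_x \in T_{\pi(x)}Y.
\end{equation*}
First I would verify that this is well-defined on equivalence classes (which is immediate, since $D\pi_x$ vanishes on $\ker D\pi_x$) and $\mathbb{R}$-linear in the fiber. Then I would invoke the rank-nullity theorem fiber by fiber: since $\pi$ is a submersion, $D\pi_x$ is surjective onto $T_{\pi(x)}Y$ with kernel $\ker D\pi_x$, so the induced map on the quotient is a linear isomorphism in each fiber.

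Next I would establish smoothness and the vector bundle structure. Since $\pi$ is a submersion of constant rank $m$, around any $x\in X$ one can choose submersion charts in which $\pi$ looks like the projection $(x^1,\dots,x^n)\mapsto(x^1,\dots,x^m)$. In such a chart, $\ker D\pi$ is the smooth subbundle spanned by $\partial/\partial x^{m+1},\dots,\partial/\partial x^n$, so $TX/\ker D\pi$ is a smooth rank-$m$ vector bundle with local frame $\bigl([\partial/\partial x^1],\dots,[\partial/\partial x^m]\bigr)$. Simultaneously, $\pi^{-1}TY$ has the local frame $\bigl(\partial/\partial y^1\!\circ\pi,\dots,\partial/\partial y^m\!\circ\pi\bigr)$. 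In these frames $\varphi^{-1}$ simply sends $[\partial/\partial x^i]\mapsto \partial/\partial y^i\!\circ\pi$ for $i=1,\dots,m$, which is smooth with smooth inverse. Hence $\varphi^{-1}$ is a smooth vector bundle isomorphism over $X$, and $\varphi := (\varphi^{-1})^{-1}$ is the desired map.

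The bulk of the work is bookkeeping: confirming that the fiberwise construction globalises to a bundle map, which is guaranteed by the constant-rank assumption supplied by the submersion property of $\pi$. The only place a small obstacle could arise is ensuring that the quotient bundle $TX/\ker D\pi$ is indeed a smooth vector bundle (not merely a fiberwise quotient); this, however, is standard once $\ker D\pi$ is known to be a smooth subbundle, which was already noted in Section~\ref{ssc:Pre} since $\pi$ is a submersion. With that in hand, the submersion-chart computation above writes $\varphi$ explicitly and produces all required smoothness at once.
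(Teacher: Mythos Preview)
Your argument is correct and produces exactly the same isomorphism as the paper, since the paper records the identity $\varphi^{-1}\circ\rho = D\pi$, which is precisely your definition of $\varphi^{-1}$. The difference is purely in the direction of construction: you build $\varphi^{-1}$ directly from $D\pi$ via the first isomorphism theorem and rank--nullity, whereas the paper builds $\varphi$ first, in local coordinates, using the Moore--Penrose pseudoinverse $M^+$ of the Jacobian $M$ and setting $\varphi_x(\tilde v) = [M^+\tilde v]$. Your route is the more standard and economical one, and your use of submersion charts makes the smoothness of the bundle map explicit, a point the paper passes over quickly. The paper's pseudoinverse formulation, on the other hand, gives a concrete coordinate expression for $\varphi$ itself (not just $\varphi^{-1}$), which can be convenient if one later wants to write things like $\varphi(\tilde v)$ explicitly; but nothing downstream in the paper actually requires that, so your approach loses nothing.
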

\begin{proof}
We shall show that on each fiber $\varphi_x:T_{\pi(x)}Y\to T_xX/\ker D\pi_x$ is a vector space isomorphism. Let $\tilde v\in T_{\pi(x)}Y$. We fix local coordinates and denote the Jacobian of $\pi$ by $M^a_i=\frac{\partial \pi^a}{\partial x^i}$. There exists a unique pseudoinverse \cite{Penr55} $M^+$ such that $M^+M:T_xX\to (\ker M)^\perp$ is an orthogonal projection and $MM^+=\id_{T_{\pi(x)}X}$.
We show that $\varphi_x:\tilde v\mapsto \big[M^+\tilde v\big]$ is one-to-one and onto. Suppose $\varphi_x \tilde v=\varphi_x \tilde v'$, then $M^+\tilde v-M^+\tilde v'=w$ and $w\in \ker M$. 
Applying $M$ yields $\tilde v=\tilde v'$. 
To show surjectivity, we construct $\tilde v=M\big[v\big]$, which is the element that maps to $\big[v\big]$. So $\varphi_x$ is clearly a fiberwise isomorphism and $\varphi$ is a vector bundle isomorphism. In fact,
\begin{equation}
 \varphi^{-1}\circ \rho=D\pi \label{eq:dfandf}
\end{equation} 
is the differential.
\end{proof}

\begin{mydef}[Lumping Connection]  \label{partial_connection}
 The Lumping Connection is a partial connection 
 $$\mathring\nabla^{\pi^{-1}TY}:\Gamma^{\infty}(X,\ker D\pi)\times \Gamma^{\infty}(X,\pi^{-1}TY)\break\to \Gamma^{\infty}(X,\pi^{-1}TY)$$ 
defined by
\begin{equation}
 \mathring\nabla^{\pi^{-1}TY}_w\tilde v:=D\pi\lb w,v\rb,\label{eq:conn}
\end{equation}
where $w\in \Gamma^{\infty}(X,\ker D\pi)$, $v\in \Gamma^{\infty}(X,TX)$ and $\tilde v=D\pi  v\in  \Gamma^{\infty}(X,\pi^{-1}TY)$.
\end{mydef}

Definition \ref{partial_connection} indeed satisfies the requirements of a connection: Let $f\in \mathcal C^\infty(Y,\mathbb R)$ be a test function on $Y$. 
Recall that $D\pi w[f]:=w[f\circ \pi]$; so,
\begin{equation}
 D\pi\lb w,v\rb [f]=w[v[f\circ\pi]]-v[w[f\circ\pi]]\label{eq:DpiOnLie}\;.
\end{equation}
If $w \in\Gamma^{\infty}(X,\ker D\pi)$ then the second term vanishes. The first term is linear in $w$ and a derivation in $D\pi v$.

\begin{myprop}
\label{pr:BottvsLeo}
The connection defined in \eqref{eq:conn} is related to the Bott connection \eqref{eq:Bott} through the commutative diagram
\begin{center}
\begin{tikzpicture}
    \node (Y1) at (0,0) {$\pi^{-1}TY$};
    \node[right=of Y1] (Y2) {$TX/\ker D\pi$};
    \node[below=of Y2] (X2) {$TX/\ker D\pi$};
    \node[below=of Y1] (X1) {$\pi^{-1}TY$};
    \draw[->] (Y1)--(Y2) node [midway,above] {$\varphi$};
    \draw[->] (X1)--(X2) node [midway,above] {$\varphi$};
    \draw[->] (X2)--(Y2) node [midway,right] {$\mathring\nabla^{TX/\ker D\pi}$}; 
    \draw[->] (X1)--(Y1) node [midway,left] {$\mathring\nabla^{\pi^{-1}TY}$};
\end{tikzpicture}
\end{center}
where $TX/\ker D\pi=Q$ in \eqref{eq:Bott}.
\end{myprop}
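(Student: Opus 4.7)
The plan is to interpret the diagram in the standard way: for each fixed $w \in \Gamma^\infty(X,\ker D\pi)$, the claim is that the square of maps from $\Gamma^\infty(X,\pi^{-1}TY)$ to $\Gamma^\infty(X,TX/\ker D\pi)$ reads
$$\varphi \circ \mathring\nabla^{\pi^{-1}TY}_w \;=\; \mathring\nabla^{TX/\ker D\pi}_w \circ \varphi.$$
I will verify this by unfolding both sides directly from \eqref{eq:conn} and \eqref{eq:Bott} and bridging them with the identity \eqref{eq:dfandf}, rewritten as $\rho = \varphi\circ D\pi$.

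First, I fix $\tilde v \in \Gamma^\infty(X,\pi^{-1}TY)$ and choose any lift $v \in \Gamma^\infty(X,TX)$ with $D\pi\,v = \tilde v$. Such a lift exists at least locally because $\pi$ is a submersion, and since both sides of the target identity are pointwise in $\tilde v$ it is enough to work in a pullback chart $U\cap \pi^{-1}V$. Then Definition \ref{partial_connection} rewrites the left-hand side as $\varphi\bigl(D\pi\lb w,v\rb\bigr)$, and a single application of \eqref{eq:dfandf} collapses this to $\rho\lb w,v\rb$.

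For the right-hand side, \eqref{eq:dfandf} gives $\varphi\tilde v = \varphi\circ D\pi\,v = \rho v = [v]$, so the Bott formula \eqref{eq:Bott} produces
$$\mathring\nabla^{TX/\ker D\pi}_w(\varphi\tilde v) = \rho\lb w,\rho^{-1}[v]\rb.$$
Taking $v$ itself as the representative $\rho^{-1}[v]$ reduces this to $\rho\lb w,v\rb$, matching the left-hand side exactly.

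The only genuine point needing care — and the step I expect to be the main obstacle to write cleanly — is the well-definedness of both sides with respect to the choice of lift and of representative. If $v' = v + u$ with $u \in \Gamma^\infty(X,\ker D\pi)$, then since $w$ also lies in $\ker D\pi$ and the vertical distribution is involutive, $\lb w,u\rb \in \Gamma^\infty(X,\ker D\pi)$; hence $D\pi\lb w,v'\rb = D\pi\lb w,v\rb$, and equivalently $\rho\lb w,v'\rb = \rho\lb w,v\rb$. This is precisely the argument already used to justify the Bott connection in the discussion following \eqref{eq:Bott} and to justify the lumping connection via \eqref{eq:DpiOnLie}, so no new ingredient is required: the proposition reduces to keeping the three maps $D\pi$, $\rho$, and $\varphi$ straight and invoking \eqref{eq:dfandf} once.
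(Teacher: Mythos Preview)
Your argument is correct and follows essentially the same route as the paper's proof: both reduce the commutativity to the identity \eqref{eq:dfandf} (equivalently $\rho=\varphi\circ D\pi$) and the definition of the Bott connection. The paper compresses the whole computation into the single chain $\varphi\mathring\nabla^{\pi^{-1}TY}_w\tilde v=\varphi\circ\varphi^{-1}\circ\rho\lb w,\rho^{-1}(\varphi(\tilde v))\rb=\mathring\nabla^{TX/\ker D\pi}_w\varphi(\tilde v)$, whereas you separate the two sides and spell out the well-definedness with respect to the lift, but the content is identical.
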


\begin{proof}
By \eqref{eq:dfandf},
\begin{equation*}
\varphi\mathring\nabla^{\pi^{-1}TY}_w\tilde v=\varphi\circ \varphi^{-1}\circ \rho \lb w,\rho^{-1}(\varphi(\tilde v))\rb=\mathring\nabla^{TX/\ker D\pi}_w\varphi(\tilde v).
\end{equation*}
Therefore, $\varphi\mathring\nabla^{\pi^{-1}TY}_w\tilde v=\mathring\nabla^{TX/\ker D\pi}_w\varphi(\tilde v)$ for any $w\in \Gamma^{\infty}(X,\ker D\pi)$.
\end{proof}

\begin{myprop}\label{pr:completionofpartialconnection}
 Let $\bar\nabla^{TY}$ be a torsion-free connection on $TY$. Then
$
 \pi^{*}\bar\nabla^{\pi^{-1}TY}\;\nonumber
$
 completes the partial connection \eqref{eq:conn}.
\end{myprop}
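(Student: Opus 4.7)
The plan is to show that the full pullback connection $\pi^{*}\bar\nabla^{\pi^{-1}TY}$, when its first slot is restricted to sections of $\ker D\pi$, reproduces the Lumping Connection of Definition~\ref{partial_connection}; this is exactly what it means to \emph{complete} the partial connection \eqref{eq:conn} to a full one.

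First I would represent an arbitrary section $\tilde v\in\Gamma^\infty(X,\pi^{-1}TY)$ as $\tilde v=D\pi\,v$ for some $v\in\Gamma^\infty(X,TX)$. Such a lift exists because $\pi$ is a submersion, so $D\pi:TX\to\pi^{-1}TY$ is a surjective vector bundle morphism and admits a smooth splitting (locally by horizontal lifts, globally by a partition of unity). With this $v$ fixed, I apply Lemma~\ref{pr:torsionfree} with $g=\pi$, $M=X$, $N=Y$, to obtain
\[
\pi^{*}\bar\nabla^{\pi^{-1}TY}_w D\pi\,v \;-\; \pi^{*}\bar\nabla^{\pi^{-1}TY}_v D\pi\,w \;=\; D\pi\lb w,v\rb.
\]
Restricting to $w\in\Gamma^\infty(X,\ker D\pi)$ annihilates the middle term because $D\pi\,w=0$, and what remains is
\[
\pi^{*}\bar\nabla^{\pi^{-1}TY}_w \tilde v \;=\; D\pi\lb w,v\rb \;=\; \mathring\nabla^{\pi^{-1}TY}_w \tilde v,
\]
matching the defining identity \eqref{eq:conn}. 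Since $\pi^{*}\bar\nabla^{\pi^{-1}TY}$ is a bona fide full connection on $\pi^{-1}TY$, this provides the desired completion.

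Before declaring victory I would verify that the right-hand side depends only on $\tilde v$ and not on the choice of lift $v$. If $D\pi\,v=D\pi\,v'$, then $v-v'\in\Gamma^\infty(X,\ker D\pi)$, and since $\ker D\pi$ is involutive (it is the kernel of a submersion) and $w\in\Gamma^\infty(X,\ker D\pi)$, the bracket $\lb w,v-v'\rb$ remains in $\ker D\pi$ and is killed by $D\pi$. This also re-establishes the consistency already implicit in \eqref{eq:conn}.

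The main obstacle is really just bookkeeping: tracking where the torsion-freeness hypothesis on $\bar\nabla^{TY}$ is consumed. It enters only through Lemma~\ref{pr:torsionfree}, which is precisely what converts the Lie bracket $D\pi\lb w,v\rb$ into a symmetric combination of two covariant derivatives, one of which can then be eliminated by the $w\in\ker D\pi$ hypothesis. No further structure on $Y$ or $\pi$ is required.
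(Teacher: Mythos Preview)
Your proposal is correct and follows essentially the same route as the paper: both invoke Lemma~\ref{pr:torsionfree} (equation~\eqref{eq:Antilinearity}) with $g=\pi$, then use $D\pi\,w=0$ for $w\in\Gamma^\infty(X,\ker D\pi)$ to kill one of the two covariant-derivative terms, leaving $\pi^{*}\bar\nabla^{\pi^{-1}TY}_w D\pi\,v=D\pi\lb w,v\rb=\mathring\nabla^{\pi^{-1}TY}_w D\pi\,v$. Your additional remarks on the existence of a lift and its irrelevance to the result are correct and make explicit what the paper leaves implicit in Definition~\ref{partial_connection}.
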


\begin{proof}
Let $w \in \Gamma^{\infty}(X,\ker D\pi)$. By \eqref{eq:Antilinearity} we have 
\begin{align}
\pi^{*}\bar\nabla^{\pi^{-1}TY}_{w}D\pi v=D\pi\lb w, v\rb=\mathring\nabla^{\pi^{-1}TY}_{w}D\pi v, \nonumber
\end{align}
and therefore
$ \pi^{*}\bar\nabla^{\pi^{-1}TY} 
=  \mathring\nabla^{\pi^{-1}TY} +\pi^{*}\bar\nabla^{\pi^{-1}TY}\big|_{(\ker D\pi)^\perp}$.
 \end{proof}

We now connect all of these concepts to exact lumpability.

%

\begin{mythm}
\label{pr:main3}
The system 
\eqref{eq:dydef} is exactly lumpable for $\pi$ iff 
$\Gamma^\infty(X,\ker D\pi)$ is invariant under $\mathcal L_v$.
\end{mythm}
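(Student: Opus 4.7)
The plan is to prove both directions by reducing to Proposition \ref{pr:main1}, with the backward implication being the substantive step.

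For the forward direction ($\Rightarrow$), I would invoke Proposition \ref{pr:main1} to obtain a witness $\tilde v$ with $D\pi v=\tilde v\circ\pi$ and verify that $D\pi\lb w,v\rb = 0$ for every $w\in\Gamma^\infty(X,\ker D\pi)$. Testing on $f\in C^\infty(Y,\mathbb R)$ via \eqref{eq:DpiOnLie}, the summand $v[w[f\circ\pi]]$ vanishes because $w[f\circ\pi]=D\pi w[f]=0$, while the summand $w[v[f\circ\pi]]=w[(\tilde v[f])\circ\pi]=D\pi w[\tilde v[f]]$ vanishes for the same reason. Hence $\mathcal L_v w = \lb v,w\rb \in \Gamma^\infty(X,\ker D\pi)$, establishing invariance.

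For the backward direction ($\Leftarrow$), the lumping connection of Definition \ref{partial_connection} reinterprets the hypothesis cleanly: invariance of $\Gamma^\infty(X,\ker D\pi)$ under $\mathcal L_v$ says precisely that $\mathring\nabla^{\pi^{-1}TY}_w(D\pi v) = D\pi\lb w,v\rb = 0$ for every vertical $w$. To extract the field $\tilde v$ on $Y$, I would pass to local fibered coordinates $(u^1,\dots,u^m,u^{m+1},\dots,u^n)$ adapted to the submersion, in which $\pi(u)=(u^1,\dots,u^m)$ and $\ker D\pi=\spn\{\partial_{u^k}\}_{k>m}$. Writing $v=v^i\partial_{u^i}$, the vanishing of $D\pi\lb\partial_{u^k},v\rb$ for $k>m$ reduces to $\partial v^a/\partial u^k = 0$ for $a\le m$, so each of the components $v^a$ factors through $\pi$ on the chart. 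Setting $\tilde v(y):=D\pi_x v(x)$ for any $x\in\pi^{-1}(y)$ then yields a smooth vector field on $Y$, and Proposition \ref{pr:main1} completes the argument.

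The main obstacle is the global well-definedness of $\tilde v$: the local coordinate calculation only shows that $D\pi v$ is constant along each connected component of a fiber, so the construction tacitly uses the identification of fibers with maximal integral submanifolds of $\ker D\pi$ made in Section \ref{ssc:Pre}, equivalently the connectedness of the fibers of $\pi$. Once that identification is in force, consistency across overlapping adapted charts is automatic and the smooth vector field $\tilde v$ exists globally on $Y$.
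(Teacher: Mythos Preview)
Your forward direction is essentially the paper's argument: both compute $D\pi\lb v,w\rb$ on test functions via \eqref{eq:DpiOnLie}, substitute $v[f\circ\pi]=\tilde v[f]\circ\pi$ from Proposition~\ref{pr:main1}, and observe that both summands vanish because $D\pi w=0$.

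Your backward direction is correct but follows a genuinely different route. The paper stays in the connection formalism it has built up: it invokes Proposition~\ref{pr:completionofpartialconnection} to identify $\mathring\nabla^{\pi^{-1}TY}_w(D\pi v)$ with the pullback connection $\pi^*\bar\nabla^{\pi^{-1}TY}_w(D\pi v)$, so that the vanishing of $D\pi\lb w,v\rb$ says $D\pi v$ is covariantly constant along the fibers, and then checks smoothness of $\tilde v$ by lifting curves from $Y$ to $X$. You instead pass to adapted submersion coordinates and read off $\partial v^a/\partial u^k=0$ directly, which simultaneously gives well-definedness and smoothness on each chart. Your argument is more elementary and self-contained; the paper's argument is more conceptual and justifies the machinery (lumping connection, its completion) developed in the section. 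Both arguments only yield constancy of $D\pi v$ along connected components of fibers, and you are right to flag this: the paper silently relies on the identification in Section~\ref{ssc:Pre} of the fibers $\pi^{-1}(y)$ with the maximal integral submanifolds of $\ker D\pi$, which amounts to assuming connected fibers.
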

\begin{proof}
First we show that exact lumpability implies the invariance of \break $\Gamma^\infty(X,\ker D\pi)$ under $\mathcal L_v$. By exact lumpability, we know from (\ref{eq:prop1}) that there is a vector field $\tilde v$ such that $v[f\circ \pi]=\tilde v[f]\circ \pi$ for any test function $f\in \mathcal C^\infty(Y,\mathbb R)$. Substituting this condition into \eqref{eq:DpiOnLie} yields
\begin{align*}
D \pi\, \lb v,w\rb[f]&=v[w[f\circ\pi]]-w[\tilde v[f]\circ \pi] 
\end{align*}
The right hand side equals $v[D\pi w [f]]-D\pi w[\tilde v[ f]]$. So the left hand side vanishes for $w\in \Gamma^\infty(X,\ker D\pi)$.

Secondly we show that exact lumpability is implied by the invariance of \break $\Gamma^\infty(X,\ker D\pi)$ under $\mathcal L_v$.  We want to define the vector field $\tilde v$ as a smooth function of $y$ such that $\tilde v_{\pi(x)}=D \pi_x v(x) $ for all $x\in X$. 
This would imply exact lumpability due to (\ref{eq:prop1}). If $D \pi_x v(x)$ is constant along the fibers $x\in\pi^{-1}(y)$, then $\tilde v$ is well defined everywhere modulo smoothness, since $\pi$ is surjective.
We consider a vector field $w \in \Gamma^{\infty}(X,\ker D\pi)$ tangent to the fibers. By Proposition \ref{pr:completionofpartialconnection} the covariant derivative $\pi^* \bar\nabla^{\pi^{-1}TY}_w D\pi v=\mathring\nabla^{\pi^{-1}TY}_w D\pi v= D\pi\lb w,v\rb=0$ vanishes if $\Gamma^\infty(X,\ker D\pi)$ is invariant under $\mathcal L_v$. 
 
It remains to show that $\tilde v$ is a smooth function of $y$. This is the case if for any smooth curve $\tilde \gamma_y:(-\epsilon,\epsilon)\to Y$ the composition $\tilde v\circ\tilde \gamma_y$ is a smooth function in time.
But any such curve can be viewed as the composition of $\pi$ with a curve $\gamma_x:(-\epsilon,\epsilon)\to X$, where $\pi(x)=y$. 
Since for any $\gamma_x$ the equality $\tilde v\circ \pi \circ \gamma_x=D \pi\, v \circ \gamma_x$ holds, and since the right hand side is a composition of smooth functions and is thus also smooth, it follows that $\tilde v$ must be smooth. 
\end{proof}

\begin{cor}
\label{cr:InvFlow}
The system
\eqref{eq:dydef} is exactly lumpable for $\pi$ iff $\ker D\pi$ is invariant under the flow $\Phi$.
\end{cor}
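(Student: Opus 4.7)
The plan is to obtain this corollary by chaining two equivalences already established in the paper, so that no new geometric content is required beyond what has been set up. I view the statement as saying that the two characterizations of lumpability, one phrased at the level of the flow on the distribution and the other at the level of vector field (Lie derivative) dynamics on its sections, are interchangeable.

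Concretely, I would proceed in two steps. First, I invoke Theorem \ref{pr:main3}, which states that \eqref{eq:dydef} is exactly lumpable for $\pi$ if and only if $\Gamma^\infty(X,\ker D\pi)$ is invariant under $\mathcal L_v$. Second, I invoke Proposition \ref{pr:distr}, which states that $\ker D\pi$ is invariant under the flow $\Phi$ if and only if $\Gamma^\infty(X,\ker D\pi)$ is invariant under $\mathcal L_v$. Combining these two biconditionals by transitivity yields the corollary.

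Since both equivalences have already been proven, there is genuinely no obstacle here; the only thing to be a little careful about is that Proposition \ref{pr:distr} is a statement about the smooth section space of the subbundle $\ker D\pi$, which matches the hypothesis of Theorem \ref{pr:main3} exactly, so no intermediate smoothing or localization step is needed. Thus the entire proof reduces to one or two lines citing the two prior results and concluding by transitivity.
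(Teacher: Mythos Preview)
Your proposal is correct and matches the paper's own proof, which simply says the corollary follows immediately from Proposition \ref{pr:distr} (implicitly combined with the just-proved Theorem \ref{pr:main3}).
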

\begin{proof}
This follows immediately from Proposition \ref{pr:distr}.
\end{proof}

\begin{cor}\label{cr:exlumpAndInvar}
The system
\eqref{eq:dydef} is exactly lumpable for $\pi$ iff $\ker D\pi \subseteq \ker\mathcal L_v^{\bar \nabla} D\pi$.
\end{cor}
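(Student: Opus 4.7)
The plan is to observe that this corollary is essentially a direct transitive closure of two earlier results, so the proof should be a short one-liner rather than a fresh argument. I will simply chain Corollary \ref{cr:InvFlow} with Proposition \ref{pr:InvarUnderLvDpi}.

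More specifically, first I would invoke Corollary \ref{cr:InvFlow}, which tells us that exact lumpability of \eqref{eq:dydef} for $\pi$ is equivalent to the invariance of $\ker D\pi$ under the flow $\Phi$. Then I would invoke Proposition \ref{pr:InvarUnderLvDpi}, which states that $\ker D\pi$ is invariant under $\Phi$ iff $\ker D\pi \subseteq \ker \mathcal L_v^{\bar\nabla} D\pi$. Transitivity of the biconditional gives the claim.

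I do not expect any real obstacle here: both propositions have already established the two halves of the equivalence, and no new computation is required. The one subtlety worth a brief remark is that Proposition \ref{pr:InvarUnderLvDpi} depends on the choice of the torsion-free connection $\bar\nabla$ used to define $\mathcal L^{\bar\nabla}_v D\pi$, but the equivalence is asserted for any such choice (as is clear from the infinitesimal reformulation in its proof), so the corollary likewise holds irrespective of the specific torsion-free completion. Thus the full proof can be written simply as: by Corollary \ref{cr:InvFlow}, exact lumpability is equivalent to invariance of $\ker D\pi$ under $\Phi$, and by Proposition \ref{pr:InvarUnderLvDpi} the latter is equivalent to $\ker D\pi \subseteq \ker \mathcal L_v^{\bar\nabla} D\pi$.
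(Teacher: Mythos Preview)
Your proposal is correct and matches the paper's own proof, which simply says ``This follows from Proposition~\ref{pr:InvarUnderLvDpi}'' (with the link to exact lumpability via Corollary~\ref{cr:InvFlow} left implicit since it appears immediately above). If anything, your version is slightly more explicit in spelling out the transitive chain.
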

\begin{proof}
 This follows from Proposition \ref{pr:InvarUnderLvDpi}
\end{proof}

We make the connection to control theory by introducing the 2-\textit{observability map}:
\begin{equation*}
\mathcal O_2:=\left(\begin{array}{c}
 D \pi\,  \\
 \mathcal L^{\bar\nabla}_vD \pi\, 
\end{array}\right):TX\to \pi^*TY\oplus \pi^*TY\;,
\end{equation*}
as the mapping  
\begin{equation*}
 v\mapsto (D\pi\oplus \mathcal L^{\bar\nabla}_vD \pi)(v\oplus v)
\end{equation*}
%
The $n$-observability map $\mathcal O_n:TX\to \bigoplus^n\pi^*TY$ is defined analogously with higher-order Lie derivatives. 
In the linear case, where $\dot x=v(x)=A x$ and $\pi(x)=C x$, we have $D\pi=C$, $\mathcal L^{\bar\nabla}_vD \pi=CA$, and $\mathcal O_2 = \left( \begin{array}{c}
  C\\
  CA
 \end{array}
\right)$; 
furthermore, $\mathcal O_n$ is just the standard observability matrix
familiar from linear control theory \cite{Trente12}, where the system is called \emph{observable} if $\rnk  \mathcal O_n=n$. 

\begin{myprop}
\label{pr:main4}
The system 
\eqref{eq:dydef} is exactly lumpable for $\pi$ iff  $\rnk  \mathcal O_2=\rnk D \pi\,$.
\end{myprop}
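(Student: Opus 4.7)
The plan is to reduce the statement to Corollary \ref{cr:exlumpAndInvar} via an elementary rank-nullity argument applied fiberwise.

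First I would observe that at every point $x\in X$ the map $\mathcal O_2|_x: T_xX\to T_{\pi(x)}Y\oplus T_{\pi(x)}Y$ has kernel
\[
\ker \mathcal O_2|_x \;=\; \ker D\pi_x \,\cap\, \ker (\mathcal L^{\bar\nabla}_v D\pi)_x,
\]
simply because an element of the direct sum vanishes iff both of its components vanish. In particular, $\ker \mathcal O_2|_x\subseteq \ker D\pi_x$ holds unconditionally.

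Next I would apply rank-nullity pointwise: since $\dim T_xX=n$, we have
$\rnk \mathcal O_2|_x = n - \dim \ker \mathcal O_2|_x$ and $\rnk D\pi_x = n - \dim \ker D\pi_x$. Combined with the inclusion above, equality of the ranks is equivalent to $\ker \mathcal O_2|_x = \ker D\pi_x$, which in turn is equivalent to the pointwise inclusion $\ker D\pi_x \subseteq \ker(\mathcal L^{\bar\nabla}_v D\pi)_x$. Requiring this at every $x\in X$ is precisely the subbundle inclusion $\ker D\pi \subseteq \ker \mathcal L^{\bar\nabla}_v D\pi$.

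Finally I would invoke Corollary \ref{cr:exlumpAndInvar}, which says that exact lumpability is equivalent to this very inclusion, to close the iff. There is no real obstacle here: the only thing to be careful about is to articulate the argument fiberwise (since ``rank'' is a pointwise notion) and to note that the inclusion $\ker\mathcal O_2\subseteq \ker D\pi$ is automatic, so only one direction of the kernel equality needs checking. The result is essentially a one-line corollary of the preceding development, giving the control-theoretic characterization familiar from the linear case $\rnk\binom{C}{CA}=\rnk C$.
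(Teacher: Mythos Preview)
Your argument is correct. Both your proof and the paper's reduce the statement to Corollary~\ref{cr:exlumpAndInvar}, so the overall strategy is the same; the difference is in how the linear algebra step is phrased. You argue via rank--nullity on kernels: $\ker\mathcal O_2=\ker D\pi\cap\ker\mathcal L^{\bar\nabla}_vD\pi\subseteq\ker D\pi$, so equality of ranks forces the reverse kernel inclusion. The paper instead works in local coordinates and uses the dual ``row-span'' formulation, writing rank equality as $(\mathcal L^{\bar\nabla}_vD\pi)^a_i=\phi^a{}_b\,(D\pi)^b_i$ for some coefficient matrix $\phi$, and then in the lumpable direction computes $\phi^a{}_b=(\partial_{y^b}\tilde v^a+\bar\Gamma^a_{bc}\tilde v^c)\circ\pi$ explicitly from~\eqref{eq:coframe}. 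Your route is cleaner and coordinate-free; the paper's buys the extra information that the transformation matrix is precisely the covariant Jacobian of the lumped field~$\tilde v$.
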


\begin{proof}
We consider the situation locally. Let  $\tilde\psi:V\subseteq Y\to \mathbb R^m$ be local coordinates on a patch $V\subseteq Y$, indexed by $a,b$ and $\psi:U\cap\pi^{-1}V\to\mathbb R^n$ coordinates on a pullback patch indexed by $i$. The rank of $\mathcal O_2$ is equal to the rank of $D\pi$ if and only if 
\begin{equation}
(\mathcal L^{\bar\nabla}_vD \pi)_i^a=\sum_b\phi^a{}_b (D\pi)_i^b\label{eq:matrixlumping}
\end{equation}
with smooth coefficient functions $\phi^a{}_b$. Now $w\in\ker D \pi\,$ implies $w\in\ker \mathcal L^{\bar\nabla}_vD \pi$, which implies exact lumpability  by Proposition \ref{cr:exlumpAndInvar}. 
On the other hand, considering the local coordinate form \eqref{eq:coframe} of $\mathcal L^{\bar\nabla}_vD \pi$ and demanding the system to be exactly lumpable, 
\begin{equation*}
(\mathcal L^{\bar\nabla}_vD \pi)_i^a =\frac{\partial}{\partial x^i}(\tilde v^a\circ \pi)+\bar\Gamma^a_{bc}(\tilde v^c\circ \pi)\frac{\partial \pi^b}{\partial x^i}=\sum_b\left(\frac{\partial \tilde v^a}{\partial y^b}+\bar\Gamma^a_{bc}\tilde v^c\right)\circ \pi\;(D\pi)_i^b,
\end{equation*}
which is of the form \eqref{eq:matrixlumping} and thus implies that $\rnk \mathcal O_2=\rnk D\pi$.
%
\end{proof}


\begin{cor}
 The system \eqref{eq:dydef} is exactly lumpable iff locally:
 \begin{equation*}
\bigwedge_{b=1}^m (D\pi)^b \wedge d\left( D\pi v\right)^a=0\qquad \forall \; a \in \{1,\dots, m\}.
 \end{equation*}
\end{cor}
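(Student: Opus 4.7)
The plan is to reduce the claim to Proposition \ref{pr:main4} and then translate the resulting rank condition into a statement about the vanishing of wedge products of one-forms. The main ingredients are already in place: the local coordinate expression \eqref{eq:coframe} for $\sigma^a := (\mathcal L^{\bar\nabla}_v D\pi)^a$, the linear independence of the one-forms $(D\pi)^b = d\pi^b$ (guaranteed by $\pi$ being a submersion), and the fact from Proposition \ref{pr:main4} that exact lumpability is equivalent to
\begin{equation*}
\sigma^a \;=\; \sum_b \phi^a{}_b\, (D\pi)^b
\end{equation*}
for smooth coefficients $\phi^a{}_b$ on a pullback patch $U\cap\pi^{-1}V$.

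First I would fix such a pullback patch with the coordinates used in the proof of Proposition \ref{pr:main4}, so that $d\pi^1,\dots,d\pi^m$ is a pointwise linearly independent set of one-forms. The condition displayed above says exactly that $\sigma^a$ lies pointwise in the span of $(D\pi)^1,\dots,(D\pi)^m$. I would then invoke the standard exterior algebra criterion that a one-form $\alpha$ lies in the span of $m$ linearly independent one-forms $\beta^1,\dots,\beta^m$ at a point if and only if $\beta^1\wedge\cdots\wedge\beta^m\wedge\alpha = 0$ there. Applied to $\alpha=\sigma^a$ and $\beta^b=(D\pi)^b$, exact lumpability is thus locally equivalent to
\begin{equation*}
\bigwedge_{b=1}^m (D\pi)^b \wedge \sigma^a \;=\; 0 \qquad \forall\, a\in\{1,\dots,m\}.
\end{equation*}

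The final step is to replace $\sigma^a$ by $d(D\pi v)^a$. Substituting \eqref{eq:coframe} gives
\begin{equation*}
\sigma^a \;=\; d(D\pi v)^a + \bar\Gamma^a_{bc}(D\pi v)^c\, d\pi^b,
\end{equation*}
and the Christoffel correction is already a linear combination of the $d\pi^b=(D\pi)^b$, so its wedge with $\bigwedge_{b=1}^m (D\pi)^b$ vanishes identically. Hence
\begin{equation*}
\bigwedge_{b=1}^m (D\pi)^b \wedge \sigma^a \;=\; \bigwedge_{b=1}^m (D\pi)^b \wedge d(D\pi v)^a,
\end{equation*}
and the corollary follows at once. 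The only real subtlety is the usual one of checking that the identification of a one-form lying in a given span with the vanishing of a top-rank wedge is valid pointwise and smoothly across the patch, which is immediate because the $d\pi^b$ form a smooth local coframe on the annihilator of $\ker D\pi$; I do not expect any genuine obstacle beyond this bookkeeping, since the connection-dependent Christoffel term drops out automatically.
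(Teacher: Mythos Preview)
Your proposal is correct and follows essentially the same approach as the paper's own proof: both reduce to Proposition~\ref{pr:main4}, use the local expression~\eqref{eq:coframe}, and observe that the Christoffel term is already in the span of the $(D\pi)^b$ so only $d(D\pi v)^a$ needs to be tested. Your version is in fact slightly more explicit, since you spell out the exterior-algebra criterion translating ``lies in the span'' into the vanishing of the wedge, which the paper leaves implicit.
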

\begin{proof}
Proposition \ref{pr:main4} states that the local condition \eqref{eq:matrixlumping} is necessary and sufficient for exact lumpability. So, the vectors $(D\pi)^a$ and $(\mathcal L_v^{\bar \nabla}D\pi)^b$ are linearly dependent. However, from \eqref{eq:coframe} it is seen that the second summand of $(\mathcal L_v^{\bar \nabla}D\pi)^b$ is already proportional to $(D\pi)^a$ , with the proportionality constant given by the Christoffel symbol. Hence, only the first summand $d\left( D\pi v\right)^a$ has to be checked for linear dependence.
\end{proof}

\section{Properties and Examples}
\label{sc:PrEx}

We next discuss some properties of exactly lumpable systems and illustrate them with examples. A very prominent class of submersions are fiber bundles $\pi:X\to Y$, and our examples are fiber bundle maps mostly over the 2-sphere $Y=S^2$. We begin by relating lumpability to the theory of integrable systems. Recall that a \emph{first integral} for the dynamics $v$ is a function $I:X\to\mathbb R$ such that $v[I]=0$.

\begin{myprop} \label{pr:frstintgrl}
Any system with a first integral $I$ of rank 1 is exactly lumpable.
\end{myprop}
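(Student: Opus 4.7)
The plan is to apply Proposition \ref{pr:main1} directly, with $\pi$ taken to be the first integral $I$ itself (after restricting the codomain to its image). The assumption that $I:X\to\mathbb{R}$ has rank $1$ means that $DI_x$ is surjective for every $x$, so $Y:=I(X)$ is an open subset of $\mathbb{R}$ and hence a smooth $1$-manifold, and $\pi:=I:X\to Y$ is a smooth surjective submersion of the type covered by the preceding theory.

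Next, I would unwind the defining property $v[I]=0$ of a first integral. Pointwise this reads $DI_x v(x)=v(x)[I]=0$, i.e.\ $D\pi_x v(x)=0$ for all $x\in X$. Thus choosing the (trivially smooth) vector field $\tilde v\equiv 0\in\Gamma^\infty(Y,TY)$ gives $\tilde v\circ\pi(x)=0=D\pi_x v(x)$, which is exactly the $\pi$-relatedness condition \eqref{eq:prop1}. Proposition \ref{pr:main1} then yields exact lumpability immediately.

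There is essentially no obstacle: the only point requiring a moment of care is that \textquotedblleft rank $1$\textquotedblright{} is being used to upgrade $I$ from a mere smooth function to a genuine submersion onto a $1$-dimensional manifold, so that the standing hypotheses of Section \ref{ssc:Char} are met. If one prefers to route the argument through Theorem \ref{pr:main3} instead, the verification is equally short: for any $w\in\Gamma^\infty(X,\ker DI)$ one has $\lb v,w\rb[I]=v[w[I]]-w[v[I]]=v[0]-w[0]=0$, so $\lb v,w\rb\in\Gamma^\infty(X,\ker DI)$, establishing $\mathcal{L}_v$-invariance of the vertical sections and hence exact lumpability. The lumped dynamics in either formulation is the trivial equation $\dot y=0$, which is the familiar statement that a first integral is conserved along the flow.
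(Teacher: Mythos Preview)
Your proof is correct and follows essentially the same route as the paper: both set $\pi=I$, use the rank-$1$ hypothesis to ensure $I$ is a submersion onto its image, take $\tilde v\equiv 0$, and verify condition \eqref{eq:prop1} via $DI\,v=v[I]=0$. Your additional remark routing the argument through Theorem~\ref{pr:main3} is a nice supplement but not needed.
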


\begin{proof}
Since $\rnk DI=1$, the quotient map $\pi=I$ is submersive. There exists a vector field $\tilde v=0$ on $\Img(I)$ such that $DI v=v[I]=0=\tilde v\circ I$. Thus, $v$ is exactly lumpable for $I$.
\end{proof}

\begin{rem}
Proposition \ref{pr:frstintgrl} also holds true if we relax the condition that exact lumpings have to be submersive and allow for target manifolds that have boundaries or are singular in other ways but can nevertheless be endowed with a smooth structure.
\end{rem}

In order to illustrate Proposition \ref{pr:frstintgrl}, we consider as an example the geodesic flow on the 2-sphere, which is generated by a vector field on the tangent bundle $TS^2$. We embed $TS^2\hookrightarrow\mathbb R^6$ by $(x,v)\mapsto (X,V)\in \mathbb R^3\times \mathbb R^3$, together with the requirement that the Euclidean dot products for $X$ and $V$ satisfy $X\cdot X=1$ and $X\cdot V=0$. Then,
\begin{equation}
\begin{array}{ccl}
   \frac{d}{dt}X_i&=& V_i\\
  \frac{d}{dt}V_i&=&-(V\cdot V)X_i
\end{array}\label{eq:geoflowS2}
\end{equation}
generates the geodesic flow \cite{MeHaOf09}. 
There is a stationary submanifold $\Omega=\{(X,V)\in  TS^2: V=0\}$.

We will use Proposition \ref{pr:frstintgrl} to show that the geodesic flow \eqref{eq:geoflowS2} on $TS^2\backslash\Omega$ is exactly lumpable for $I:TS^2\to \mathbb R$, given by $I(X,V)=V\cdot V$.
First we note that $I$ is a first integral to \eqref{eq:geoflowS2}, which can easily be seen by differentiating $I$ with respect to time and using $X\cdot V=0$. The geodesic flow can be viewed as a Hamiltonian flow whose energy is given by $\frac{1}{2}V\cdot V$. The rank of $I$ is 1, except on the stationary submanifold $\Omega$, where it equals 0. Hence, $I$ is submersive on $TS^2\backslash\Omega$  and satisfies $v[I]=0$. Therefore, the dynamics is exactly lumpable for $I$ by Proposition \ref{pr:frstintgrl}.

As a consequence of the energy conservation, the geodesic flow is just considered on one energy shell, say $V\cdot V=1$; so it effectively takes place on the unit tangent bundle $UTS^2\to S^2$.

\begin{myprop}\label{pr:orbitspace}
Any dynamics $v$ is exactly lumpable for the quotient map $\pi:X\to X/\Phi$ to the orbit space.
\end{myprop}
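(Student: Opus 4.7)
The plan is to invoke Proposition~\ref{pr:main1} with the trivial choice $\tilde v \equiv 0$ on the orbit space $Y = X/\Phi$. By construction of the quotient, each fiber $\pi^{-1}(\pi(x))$ is precisely the maximal integral curve of $v$ through $x$, whose tangent at $x$ is spanned by $v(x)$. Hence $v(x)\in\ker D\pi_x$ for every $x\in X$, yielding
\[
 D\pi_x\, v(x)=0=\tilde v\circ\pi(x),
\]
and exact lumpability follows immediately from Proposition~\ref{pr:main1}.

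To justify the annihilation directly from the derivation picture used in Section~\ref{ssc:Pre}, I would argue as follows: for any test function $f\in C^\infty(X/\Phi,\mathbb R)$, the pullback $f\circ\pi$ is constant along orbits, so
\[
v(x)[f\circ\pi]=\frac{d}{dt}\Big|_{t=0} f(\pi(\Phi_t(x)))=\frac{d}{dt}\Big|_{t=0} f(\pi(x))=0,
\]
and the defining identity $D\pi_x v(x)[f]:=v(x)[f\circ\pi]$ yields $D\pi_x v(x)=0$. Since the zero section $\tilde v\equiv 0$ is manifestly smooth, no further conditions need verification. Equivalently, one could feed the same observation into Theorem~\ref{pr:main3}: because $v$ itself lies in $\Gamma^\infty(X,\ker D\pi)$, the involutivity of $\ker D\pi$ together with $\mathcal L_v w=\lb v,w\rb$ makes invariance under $\mathcal L_v$ transparent.

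The main obstacle is not dynamical but structural: the orbit space $X/\Phi$ is generically neither Hausdorff nor a smooth manifold, and $\pi$ may fail to be a submersion in the strict sense of Section~\ref{ssc:Pre} (e.g.\ at fixed points, at periodic orbits of varying period, or near orbits with non-closed closure). I would handle this exactly in the spirit of Proposition~\ref{pr:frstintgrl} and its remark, which relaxes the submersion requirement and allows target spaces with boundaries or other mild singularities carrying a smooth structure; alternatively one restricts to the open subset of $X$ on which the flow acts freely and properly, where $X/\Phi$ is a bona fide smooth manifold and $\pi$ a genuine submersion. On any such domain the one-line argument above delivers the conclusion.
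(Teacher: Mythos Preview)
Your argument is correct and essentially rests on the same key observation as the paper's proof---that $v$ is tangent to the fibers of $\pi$, i.e.\ $v(x)\in\ker D\pi_x$---but you route the conclusion differently. You apply Proposition~\ref{pr:main1} directly with $\tilde v\equiv 0$, whereas the paper argues that $\ker D\pi$ (which is the line field spanned by $v$) is invariant under the flow because $D\Phi_s v = v\circ\Phi_s$, and then invokes Corollary~\ref{cr:InvFlow}. Your approach is more elementary and has the advantage of immediately identifying the lumped dynamics as the zero vector field; the paper's approach highlights the flow-invariance perspective developed in Section~\ref{ssc:LVD}, which is thematically central there but slightly less direct for this particular statement. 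Your discussion of the structural caveat (that $X/\Phi$ need not be a smooth manifold) is also a welcome addition, as the paper leaves this implicit.
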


\begin{proof}
 The kernel of $\pi$ is simply the distribution spanned by $v$. This is trivially invariant under the flow $\Phi$ generated by $v$, since $D\Phi_sv=v\circ \Phi_s$ by definition and $v=D\Phi_t\frac{\partial}{\partial t}\Big|_0$.
Exact lumpability then follows from Corollary \ref{cr:InvFlow}. 
\end{proof}
To exemplify this Proposition, we now consider the geodesic flow on the unit tangent bundle of the 2-sphere $UTS^2$. We claim that it is exactly lumpable for the cross product $(X,V)\mapsto X\times V\in S^2$ and use the above Proposition to show this.

There is an isomorphism \cite{MeHaOf09} between the unit tangent bundle $UTS^2$ and $SO(3)$, given by $(X,V)\mapsto M$, where $M_{i1}=X_i$, $M_{i2}=V_i$, and $M_{i3}=(X\times V)_i$, or in compressed notation $M=(X|V|X\times V)$. So, for any $p\in S^2$ this matrix maps to another point $y=M\cdot p\in S^2$, and there is a collection of lumping candidates indexed by $p$. We choose $p=(0,0,1)$ and calculate the vector field induced by $\pi(X,V)=M(X,V) \;p=X\times V$:
 \begin{equation*}
  \sum_{i=1}^3\frac{\partial \pi}{\partial X_i}\frac{d}{dt}X_i+\sum_{i=1}^3\frac{\partial \pi}{\partial V_i}\frac{d}{dt}V_i=(0,0,0)\;.
 \end{equation*}
Thus, the dynamics \eqref{eq:geoflowS2} on $UTS^2$ lies in the kernel of $D\pi$. But $\pi$ is surjective onto $S^2$, it has constant rank, and $\dim\ker D\pi=1$. So, the vector field and hence the flow is parallel to the fibers and every point on $S^2$ corresponds to a flowline of the geodesic flow. This is illustrated in Figure \ref{fg:geoflow}. By Proposition \ref{pr:orbitspace}, $\pi$ is an exact lumping.

  \begin{figure}[!ht]
    \subfloat[Side View\label{sbfig:1}]{%
      \includegraphics[width=0.5\textwidth]{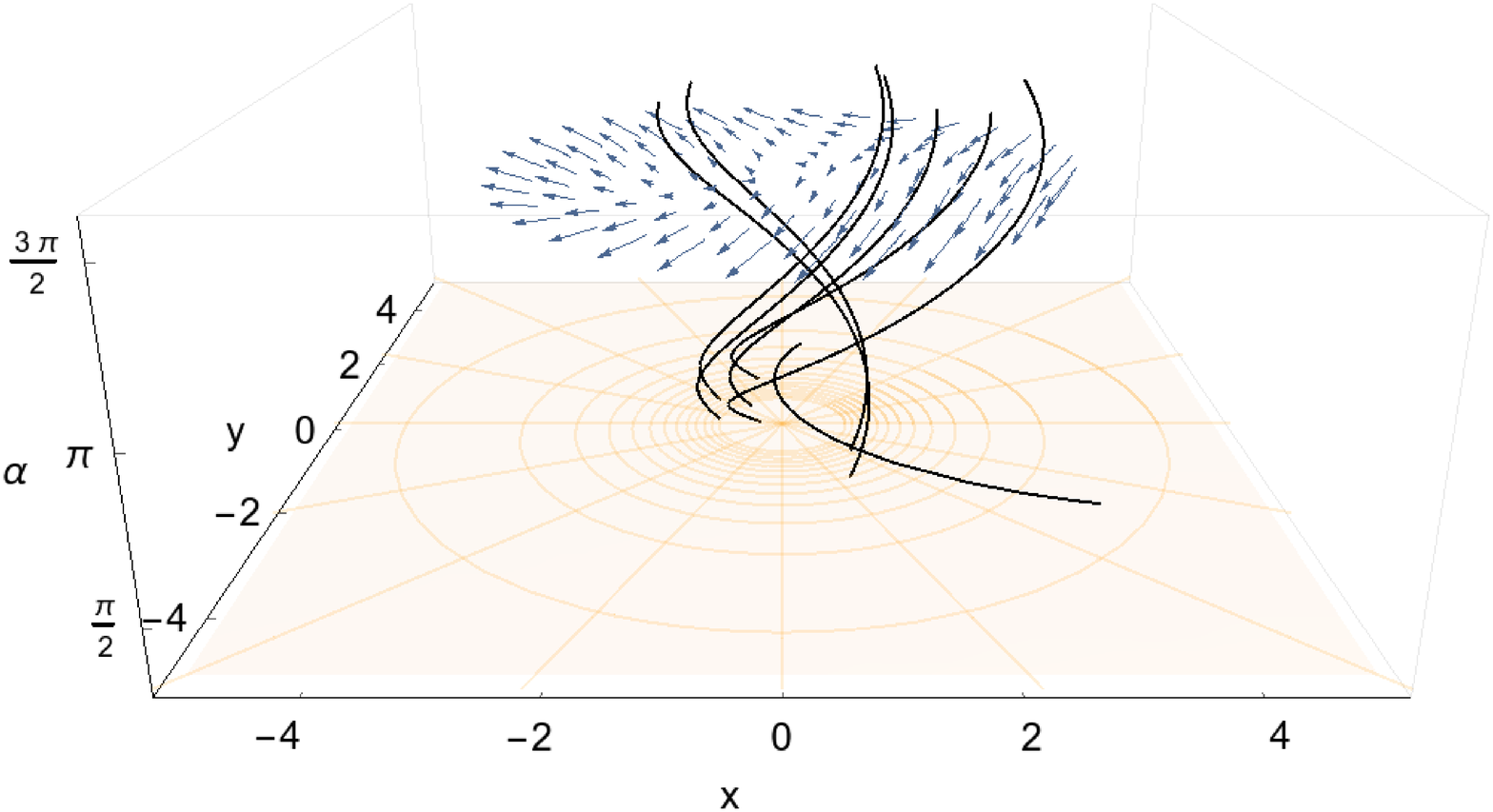}
    }
    \subfloat[Bird's Eye View\label{sbfig:2}]{%
      \includegraphics[width=0.5\textwidth]{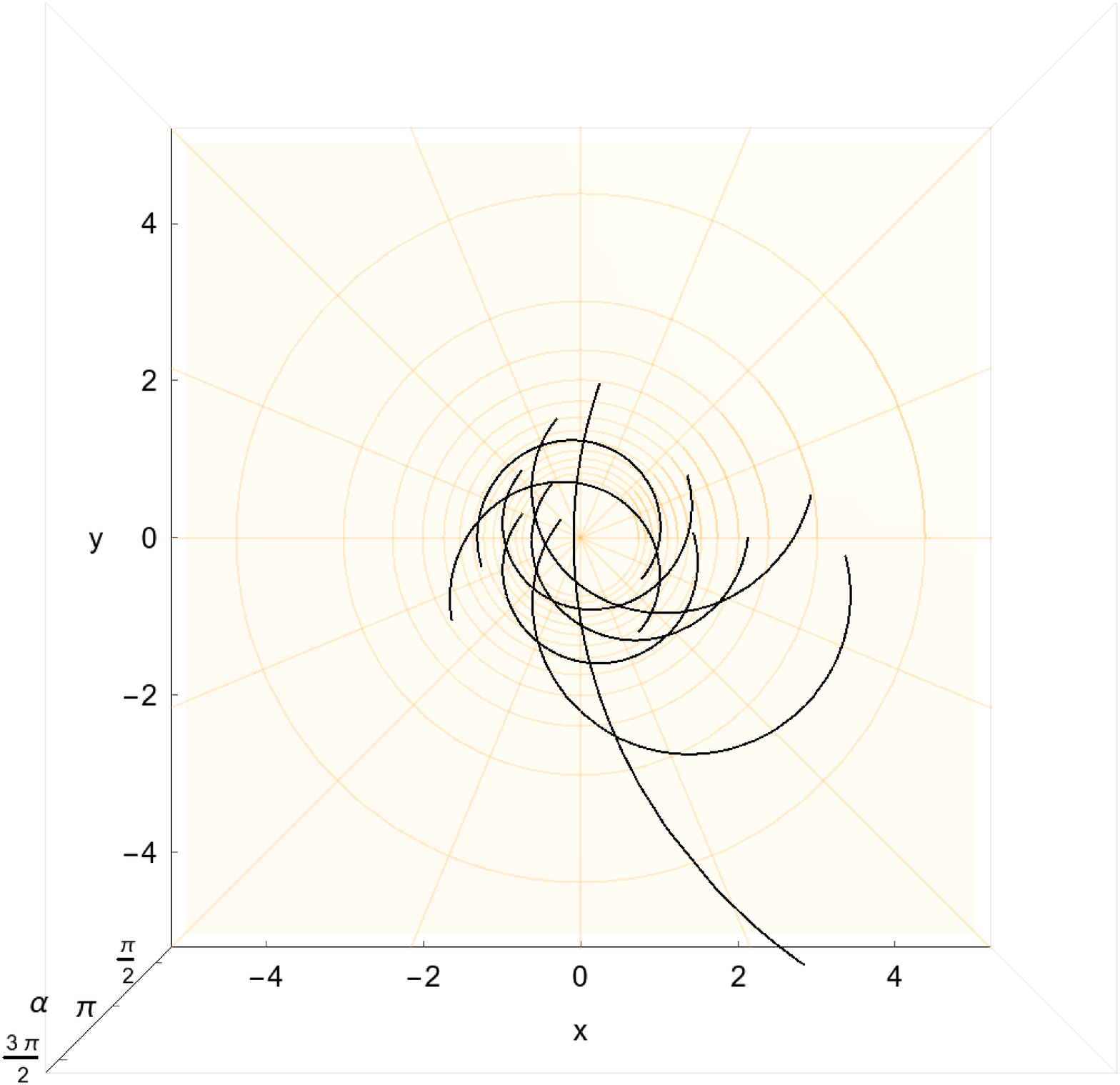}
    }
    \caption{We choose local coordinates $(x,y,\alpha)\in \psi(U)$, where $U$ is the unit tangent bundle restricted to the north pole $N\subset S^2$. The function $\psi$ acts by stereographic projection on the 2-sphere and maps the unit tangent vector $v$ to an angle $\alpha\in [0,2\pi)$, which is the angle enclosed by the $x$-direction and the push forward of $v$ under the stereographic projection. We depict fibers of the projection $\pi$ in the range $\pi/2\leq \alpha \leq 3 \pi/2$ from two different perspectives, indicating also the flow field in Figure \ref{sbfig:1}. The longitudes and latitudes of the sphere are seen on the bottom of the figures for reference.  }
    \label{fg:geoflow}
  \end{figure}

%

We next discuss the relation of lumpability to the symmetries of the system.
We shall show that the proper action of a Lie group that is compatible with the vector field results in an exact lumping; 
however, the converse is not true. Let $G$ be a finite Lie group with Lie algebra $\mathfrak{g}$. We denote by $A:G\to \text{Diff}(X)$ the left action of the Lie Group on $X$ and $a:\mathfrak g\to \Gamma^\infty(X,TX)$ the corresponding action of the Lie algebra. The action on the whole algebra is denoted by $\mathcal D=a(\mathfrak g)$. 
\begin{myprop}
\label{pr:main5}
If $\mathcal D$ is invariant under $\mathcal L_v$ and $G$ acts properly and freely, then $v$ is exactly lumpable for the quotient map $\pi:X\to X/G$. 
\end{myprop}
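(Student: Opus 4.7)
The plan is to reduce to Theorem~\ref{pr:main3} by identifying the vertical distribution $\ker D\pi$ with the pointwise span of the infinitesimal action $a(\mathfrak g)$. First I would invoke the quotient manifold theorem: since $G$ acts properly and freely on $X$, the orbit space $X/G$ inherits a unique smooth structure for which $\pi:X\to X/G$ is a smooth submersion and the fibers of $\pi$ are exactly the $G$-orbits. Freeness then ensures that for any basis $\xi_1,\dots,\xi_k$ of $\mathfrak g$ the fundamental vector fields $a(\xi_1),\dots,a(\xi_k)$ are pointwise linearly independent, so they span $T_x(G\cdot x)=\ker D\pi_x$ at every $x\in X$. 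Consequently every section $w\in\Gamma^\infty(X,\ker D\pi)$ admits a local expansion $w=\sum_i f^i\,a(\xi_i)$ with $f^i\in C^\infty(X)$.

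Next I would check that $\Gamma^\infty(X,\ker D\pi)$ is closed under $\mathcal L_v$. Using the Leibniz property of the Lie bracket on a typical section of that form,
\[
\mathcal L_v w=\sum_i\bigl(v[f^i]\,a(\xi_i)+f^i\,[v,a(\xi_i)]\bigr).
\]
The first summand is manifestly a section of $\ker D\pi$. By the hypothesis that $\mathcal D=a(\mathfrak g)$ is invariant under $\mathcal L_v$, each $[v,a(\xi_i)]$ lies in $\mathcal D\subseteq\Gamma^\infty(X,\ker D\pi)$, so the second summand is also vertical. Thus $\mathcal L_v w\in\Gamma^\infty(X,\ker D\pi)$, and Theorem~\ref{pr:main3} immediately delivers exact lumpability.

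The main obstacle I anticipate is not the bracket calculation but the identification $\ker D\pi=\spn\,\mathcal D$ together with the $C^\infty(X)$-module generation statement that underlies the local expansion of $w$. Both depend crucially on freeness (to guarantee constant orbit dimension equal to $\dim G$) and on properness (to guarantee the smoothness of $X/G$ via the quotient manifold theorem). Once those structural facts are in place, the proof reduces to a one-line application of the Leibniz rule combined with the $\mathcal L_v$-invariance of $\mathcal D$.
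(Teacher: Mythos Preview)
Your proposal is correct and follows essentially the same route as the paper: invoke the quotient manifold theorem to make $\pi$ a smooth submersion, identify $\ker D\pi$ with the distribution spanned by the fundamental vector fields $a(\mathfrak g)$, and then feed the $\mathcal L_v$-invariance hypothesis into Theorem~\ref{pr:main3}. If anything, your argument is more careful than the paper's, which asserts $\mathcal D=\Gamma^\infty(X,\ker D\pi)$ outright; your local $C^\infty$-expansion $w=\sum_i f^i a(\xi_i)$ together with the Leibniz rule is exactly what is needed to pass from invariance of the finite-dimensional $\mathcal D$ to invariance of the full module of vertical sections.
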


\begin{proof}
By the quotient manifold theorem \cite{Lee03} the quotient map of a proper and free Lie group action is a submersion and the quotient space has a natural smooth manifold structure. The vector fields that generate the action are annihilated by the differential of the quotient map; therefore, $\mathcal D=\Gamma^\infty(X,\ker D\pi)$ and so Proposition \ref{pr:main3} implies exact lumpability. 
\end{proof}
%
%
%

The converse statement to Proposition \ref{pr:main5} is not true. Given a vector field $v$ and a lumping $\pi$, the level sets need not be orbits of a proper and free Lie group action. The integrable distribution of a free Lie group action is spanned by its linearly independent generators making $\mathcal D$ a finitely generated submodule of the sections of $TX$. There are many integrable distributions that are not finitely generated and thus do not stem from a Lie Group action. Any section of such a distribution gives rise to a lumping that does not stem from a Lie group action.

The Hopf fibration over $S^2$,
\begin{equation*}
S^1\hookrightarrow S^3\stackrel{\pi}{\longrightarrow}S^2,
\end{equation*}
illustrates Proposition \ref{pr:main5}. We use the formulation of the Hopf map in terms of the quaternions $\mathbb H=(\mathbb R^4,\star,{}^*)$, which is the vector space $\mathbb R^4$ together with an involution ${}^*:\mathbb H\to \mathbb H$ and an algebra product $\cdot \star\cdot :\mathbb H\times \mathbb H\to \mathbb H$. Let $a=(a_0,a_1,a_2,a_3)$ and $ b=(b_0,b_1,b_2,b_3)$ be two elements in $\mathbb H$. Then $\star$ is defined by
\begin{align}
(a\star b)_0=& a_0b_0-a_jb_j\nonumber\\
(a\star b)_i=& a_0b_i+a_i b_0 + \epsilon_{ijk}a_jb_k,\nonumber
\end{align}
where the indices $i,j,k$ run over $\{1,2,3\}$ and $\epsilon_{ijk}$ is the Levi-Civita symbol. It is totally antisymmetric in its indices. The involution acts as $(a_0,a_1,a_2,a_3)\mapsto (a_0,-a_1,-a_2,-a_3)$. The 3-sphere $S^3$ can be embedded into $\mathbb H$ by $U\mathbb H=\{x\in\mathbb H : ||x||=1\}$. To each unit quaternion $x\in U\mathbb H$ one can associate an element in $SO(3)$, acting on purely imaginary quaternions $u\in\mathfrak I\mathbb H=\{a\in\mathbb H:a_0=0\}\cong \mathbb R^3$ by
\begin{equation*}
u\mapsto R_x(u)=  x\star u \star x^* \in \mathfrak I \mathbb H.
\end{equation*}
One can show that the mapping $x\mapsto R_x$ is a smooth, nondegenerate, two-to-one, surjective assignment of any $x$ to an element of $SO(3)$ and that $S^3$ is in fact the double cover of $SO(3)$. Hence there is a collection of submersions $\pi_u:S^3\to S^2$ indexed by vectors $u\in S^2$ that act like $\pi_u(x)=R_x(u)$. Choosing $u=(0,0,1)$ and setting $\pi=\pi_u$ we get
\begin{equation}
 \pi_i(x)=(x_0^2-x_jx_j)\delta_{i3}+2x_0\epsilon_{ij3}x_j + 2x_3x_i \label{eq:quotientmap}
\end{equation}
as one example of a Hopf map. Alternatively, one can describe this map as the quotient of a $U(1)$ action on $S^3\cong U\mathbb H$. We use the abreviation $\mathbb I=(1,0,0,0)$, $I=(0,1,0,0)$, $J=(0,0,1,0)$, and $K=(0,0,0,1)$. They satisfy the quaternion algebra $I\star I= J\star J=K\star K=I\star J\star K =-\mathbb I$. The $U(1)$ action 
\begin{equation}
(\text{e}^{Kt},x)\mapsto \text{e}^{Kt}\star (x_0+Kx_3)+ \text{e}^{-Kt}\star J\star (x_2-Kx_1)\label{eq:U1action} 
\end{equation}
 is generated by the vector field $w(x)=(-x_3,x_2,-x_1,x_0)$. We now show that $\pi$ is the quotient map of the $U(1)$-action \eqref{eq:U1action}. The differential of \eqref{eq:quotientmap} is given by
 \begin{equation*}
  (D\pi)_{i\mu}=2(x_0\delta_{\mu 0}-x_j \delta_{\mu j})\delta_{i 3}+2(x_j \delta_{\mu 0}+x_0\delta_{\mu j})\epsilon_{ij3}+2\delta_{\mu 3}x_i+2 \delta_{\mu i} x_3.
 \end{equation*}
A calculation reveals that $D\pi w=0$, and $w$ spans $\ker D\pi$ since $\pi$ is a submersion and $\ker D\pi$ is one-dimensional.

Having introduced the lumping map $\pi:S^3\to S^2$ in the framework of quaternions and the Lie algebra action, generated by $w$, we now proceed with the example. There is a  collection of vector fields $v_c(x)=c\star x$, indexed by $c\in\mathfrak I\mathbb H$, given by
\begin{equation*}
 (v_c)_\mu(x)=-\delta_{\mu 0} c_j x_j +\delta_{\mu j}c_j x_0 +\delta_{\mu j}\epsilon_{jkl}c_k x_l,
\end{equation*}
which is exactly lumpable for $\pi$ as in \eqref{eq:quotientmap}. We will now show that this follows from Proposition \ref{pr:main5}.
 The Lie group   $U(1)$ is compact; so, its action is proper and, since $w$ is nowhere vanishing, it is also free. We check whether $\mathcal L_{v_c}w\in\Gamma^{\infty}(X,\ker D\pi)$:
 \begin{align}
  [w,v_c]_\alpha=&w_\mu\frac{\partial (v_c)_\alpha}{\partial x_\mu}-(v_c)_\mu\frac{\partial w_\alpha}{\partial x_\mu}\nonumber\\
  =& +(x_1c_2-x_0c_3-x_2c_1+x_0c_3-\epsilon_{3kl}x_kc_l)\delta_{\alpha 0}\nonumber\\
  &-(x_0c_2+\epsilon_{2kl}x_l c_k)\delta_{\alpha 1}+(x_0c_1\epsilon_{1kl}x_lc_k)\delta_{\alpha 2}+(x_jc_j-x_0c_0)\delta_{\alpha 3}
  \nonumber\\&-(x_3c_j-\epsilon_{jk1}x_2c_k+\epsilon_{jk2}x_1c_k-\epsilon_{jk3}x_0c_k)\delta_{\alpha j}\nonumber\\
  =&\;\,0\;.\nonumber
 \end{align}
So we invoke Proposition \ref{pr:main5} which implies lumpability.
In fact,
\begin{equation*}
 (D\pi v_c)_i(x)=2\epsilon_{ijk}c_j\pi_k(x).
\end{equation*}
The lumped dynamics for the vector field that generates quaternion rotations $v_c=\frac{d}{dt}\big|_0\text{e}^{tc}\star x=c\star x$ under the quotient map $\pi$ is $\tilde v_c(y)=2\,c\times y$. Clearly it runs tangent to the sphere since $\tilde v_c\cdot y=0$ for $y\in S^2$.

\begin{myprop}
\label{pr:invariantsets}
Exact lumpings preserve invariant sets.
\end{myprop}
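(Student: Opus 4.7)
The plan is to unpack what ``preserve invariant sets'' means in this context and then reduce the statement to a direct application of Proposition \ref{pr:main2}. The claim should read: if $A \subseteq X$ is invariant under the flow $\Phi$ of $v$ (meaning $\Phi_t(A) \subseteq A$ for all admissible $t$), then its image $\pi(A) \subseteq Y$ is invariant under the induced flow $\tilde\Phi$ of $\tilde v$.

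First I would pick an arbitrary point $y \in \pi(A)$ and write $y = \pi(x)$ for some $x \in A$, and consider $\tilde\Phi_t(y)$ for a time $t$ in the appropriate domain $\widetilde{\mathcal{T}}_y$. The key step is to invoke Proposition \ref{pr:main2}, which gives the intertwining identity
\begin{equation*}
\tilde\Phi_t \circ \pi = \pi \circ \Phi_t
\end{equation*}
together with the matching of time domains $\widetilde{\mathcal{T}}_{\pi(x)} = \mathcal{T}_x$. Applying this at $x$ yields $\tilde\Phi_t(y) = \tilde\Phi_t(\pi(x)) = \pi(\Phi_t(x))$, and since $\Phi_t(x) \in A$ by invariance of $A$, we conclude $\tilde\Phi_t(y) \in \pi(A)$.

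There is essentially no obstacle here; the argument is a one-line diagram chase once Proposition \ref{pr:main2} is in hand. The only subtle point worth mentioning is that the time interval over which $\tilde\Phi_t(y)$ is defined coincides with $\mathcal{T}_x$, so one should remark that the invariance of $\pi(A)$ holds for all $t \in \widetilde{\mathcal{T}}_y$, and this is independent of the representative $x \in \pi^{-1}(y) \cap A$ chosen, again by the domain equality guaranteed in Proposition \ref{pr:main2}. A brief remark could be added that this result also shows, symmetrically, that preimages under $\pi$ of $\tilde\Phi$-invariant sets are $\Phi$-invariant if one assumes exact lumpability, by reading the intertwining identity in the other direction.
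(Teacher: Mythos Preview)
Your argument is correct and mirrors the paper's proof: both apply $\pi$ to the invariance inclusion $\Phi_t(\mathcal A)\subseteq\mathcal A$ and then invoke the intertwining identity $\tilde\Phi_t\circ\pi=\pi\circ\Phi_t$ from Proposition~\ref{pr:main2}. The only cosmetic difference is that the paper phrases it set-wise (and distinguishes forward/backward invariance) while you argue pointwise, but the substance is identical.
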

\begin{proof}
 Let $\mathcal A$ be a forward (resp., backward) invariant set, i.e. for all $t\geq 0$ the flow preserves the invariant set $\Phi_t\mathcal A\subseteq \mathcal A$ (resp., $\Phi_{-t}\mathcal A\subseteq \mathcal A$). 
After a projection with the lumping map, $\pi\circ \Phi_t \mathcal A\subseteq \pi \mathcal A$ (resp., $\pi\circ \Phi_{-t} \mathcal A\subseteq \pi \mathcal A$). Invoking the lumping condition from Proposition \ref{pr:main2} yields
 \begin{equation*}
  \tilde \Phi_t \circ \pi \mathcal A\subseteq \pi \mathcal A \qquad (\text{resp., }\,\tilde\Phi_{-t} \circ \pi \mathcal A\subseteq \pi \mathcal A);
 \end{equation*}
so, $\pi \mathcal A$ is a forward (resp., backward) invariant set of $\tilde \Phi_t$.
\end{proof}

This property can be exploited to determine invariant sets of the dynamics by finding the stationary points of a 1-dimensional exact lumping.
We conclude with a final example which also illustrates this feature. For a set of real coefficients $a_i$ which are not all zero, the logistic dynamics
\begin{equation*}
\dot x_i=x_i(1- a_j x_j), \qquad i=1,\dots,n,
\end{equation*}
has two invariant sets $\Omega_0=\{a_jx_j=1\}$ and $\Omega_1=\{x=0\}$ that are preserved under the lumping map $\pi(x)=a_jx_j$.
With $v_i=x_i(1- a_j x_j)$ we calculate
\begin{equation*}
D \pi v(x)=\frac{\partial \pi}{\partial x_i}v_i(x)=a_ix_i(1-a_jx_j)
\end{equation*}
and find that $\tilde v(y)=y(1-y)$ is the lumped dynamics. Hence by Proposition \ref{pr:invariantsets}, $\pi \Omega_0$ and $\pi \Omega_1$ are invariant under $\tilde v$.

\bigskip\bigskip

\noindent\textbf{Acknowledgement.} 
The research leading to these results has received funding from the European Union's Seventh Framework Programme (FP7/2007-2013) under grant agreement no. 318723 (MATHEMACS). L.H. acknowledges funding by the Max Planck Society through the IMPRS scholarship. 

\bigskip\bigskip


\bibliographystyle{unsrt}

\end{document}